\documentclass[10pt,twoside]{amsart}
\usepackage{amsopn,comment}
\usepackage{amsthm}
\usepackage{amssymb,amsmath,amscd, eurosym}
\usepackage{comment}
\usepackage{enumitem}
\usepackage[mathscr]{eucal}
\usepackage{subfig}
\usepackage{tikz}
\usepackage{xcolor}
\usepackage{float}

\DeclareMathOperator{\link}{link}
\DeclareMathOperator{\ass}{ass}

\newtheorem{theorem}{Theorem}[section]
\newtheorem{lemma}[theorem]{Lemma}
\newtheorem{proposition}[theorem]{Proposition}
\newtheorem{corollary}[theorem]{Corollary}

\newtheorem{construction}[theorem]{Construction}

\theoremstyle{definition}
\newtheorem{definition}[theorem]{Definition}
\theoremstyle{remark}
\newtheorem{remark}[theorem]{Remark}
\newtheorem{example}[theorem]{Example}

\newtheorem{question}[theorem]{Question}
\theoremstyle{plain}

\begin{document}
    \title[Codimension two ACM varieties in $\mathbb{P}^1\times\mathbb{P}^1\times\mathbb{P}^1$]{Special arrangements of lines: codimension two ACM varieties in $\mathbb{P}^1\times\mathbb{P}^1\times\mathbb{P}^1$}

\date{ \today}

    \author{Giuseppe Favacchio}
    \address{Dipartimento di Matematica e Informatica\\
        Viale A. Doria, 6 - 95100 - Catania, Italy}
    \email{favacchio@dmi.unict.it} \urladdr{http://www.dmi.unict.it/$\sim$gfavacchio/}

    \author{Elena Guardo}
    \address{Dipartimento di Matematica e Informatica\\
        Viale A. Doria, 6 - 95100 - Catania, Italy}
    \email{guardo@dmi.unict.it}
    \urladdr{http://www.dmi.unict.it/$\sim$guardo/}

    \author{Beatrice Picone}
    \address{Dipartimento di Matematica e Informatica\\
        Viale A. Doria, 6 - 95100 - Catania, Italy}
    \email{picone@dmi.unict.it}

\keywords{Configuration of lines, multiprojective spaces, arithmetically Cohen-Macaulay}
\subjclass[2010]{13C40, 13F20, 13A15, 14C20, 14M05}
\maketitle

\begin{abstract}In this paper we investigate special arrangements of lines in multiprojective spaces. In particular, we characterize codimensional two arithmetically Cohen-Macaulay (ACM)  varieties in $\mathbb P^1\times\mathbb P^1\times\mathbb P^1$, called  \textit{varieties of lines}.  We also describe their ACM property from combinatorial algebra point of view.
\end{abstract}
\section*{Introduction}
Given a variety $X\subseteq \mathbb P^{a_1} \times \dots \times \mathbb P^{a_n}$ an interesting problem is the description of the homological invariants of the coordinate ring of $X$.
This problem was especially studied for points and there is not a general answer in this direction. A great difficulty  came from the fact that a set of distinct points $X\subseteq \mathbb P^{a_1} \times \dots \times \mathbb P^{a_n}$ is not necessarily arithmetically Cohen-Macaulay (ACM). See, for instance \cite{FG2016a, GuVT2004,GuVT2008b,GuVT2012a,GV-book,GV15} for a short spectrum of the study on this topic,  and \cite{FGM, FM} for a recent characterization of the ACM property in $\mathbb P^1\times\cdots \times\mathbb P^1$ and, under certain conditions, in $\mathbb P^1\times\mathbb P^m.$
Recently, multiprojective spaces are getting more attention since many applications have been explored. For example, a specific value of the Hilbert function of a collection of (fat) points in a multiprojective space is related to a classical problem of algebraic geometry concerning the dimension of certain secant varieties of Segre
varieties (see \cite{BBC, CGG1, CGG2} just to cite some of them). Or in \cite{BBCG,CS} the readers could deduce new results about tensors, and in \cite{D}, the author focus on the implicitization problem for tensor product surfaces.

In particular, it appears of interest in combinatorial algebraic geometry  the study of  finite arrangements of lines  (see \cite{CHMN,H} for recent developments in $\mathbb{P}^2$).  A line arrangement over an algebraically closed field $K$ is a finite collection $L_1,\dots , L_d \subseteq \mathbb{P}^2$, $d > 1$, of distinct lines in the projective plane and their crossing points (i.e., the points of intersections of the lines).
In this paper we investigate special arrangements of lines in multiprojective spaces by focusing on ACM codimensional two varieties in $\mathbb{P}^1\times\mathbb P^1\times\mathbb P^1$, called \textit{varieties of lines}.  In particular, we study special cases arising from their intersection points (see Theorem \ref{thmACM}). These varieties can be viewed as special configurations of codimension two linear  varieties in $\mathbb P^5$.


The paper is structured as follows. In Section \ref{sec:def} we set up our notation and recall known results.  In Section \ref{sec:comb_char}, we describe a connection between ideals of varieties of lines and some squarefree monomial ideals (Lemma \ref{monomial}). We introduce in Definition  \ref{def:chordal} the $Hyp_n({\star})$-property to give a combinatorial characterization of ACM varieties of lines using a well known property of chordal graphs (Theorem \ref{thm:ch1}).
In Section \ref{sec:num_char} we introduce a numerical way to check the ACM property for any varieties of lines.
In Section \ref{sec:HF} we describe the Hilbert function of Ferrers varieties of lines, a special ACM case.
Finally, in Section \ref{sec:grids} we start an investigation on varieties of lines whose crossing points set is a complete
intersection of points in $\mathbb P^1\times\mathbb P^1\times\mathbb P^1$.
We also characterize varieties of lines defined by a complete intersection ideal
in $\mathbb P^1\times\mathbb P^1\times\mathbb P^1$  (Theorem \ref{thmcompleteintersection}).
  We end the paper with two possible research topics to explore: (1) the connection between our varieties of lines in  $\mathbb{P}^1 \times \mathbb{P}^1 \times \mathbb{P}^1$ and
  special configurations of lines of $\mathbb P^3$ and (2) the Hilbert function of any ACM variety of lines (Question
  \ref{HF}).

\noindent \textbf{Acknowledgement.} The authors thank A. Van Tuyl
for his comments in a previous version of the paper. They also
thank the referee for his/her helpful suggestion. The second
author would like to thank GNSAGA and Prin 2015.  The software
 CoCoA \cite{cocoa} was indispensable for all the computations.

\section{Notation and basic facts}\label{sec:def}

Throughout the paper $\mathbb{N} := \{0,1,2, \dots \}$ denotes the set of non-negative integers and $\preceq$ denotes the natural partial order on the elements of $\mathbb{N}^3 := \mathbb{N} \times \mathbb{N} \times \mathbb{N}$ defined by $(a_1,a_2,a_3) \preceq (b_1,b_2,b_3)$ in $\mathbb{N}^3$ if and only if $a_i \le b_i \ \forall i =1,2,3$.
Let $\{ \bf{e}_1,\bf{e}_2,\bf{e}_3 \}$ be the standard basis of $\mathbb N^3$. Let $R:=K[x_{1,0},x_{1,1},x_{2,0},x_{2,1},x_{3,0},x_{3,1}]$ be the polynomial ring  over an algebraically closed field $K$ of characteristic zero. We induce a multi-grading by setting $\deg x_{i,j}=\bf{e}_i$ for $i\in \{1,2,3\}$. A monomial $m=x_{1,0}^{a_0}x_{1,1}^{a_1}x_{2,0}^{b_0}x_{2,1}^{b_1}x_{3,0}^{c_0}x_{3,1}^{c_1} \in R$ has \textit{tridegree} (or simply, \textit{degree}) $\deg m= (a_0+a_1, b_0 + b_1, c_0 +c_1)$. We make the convention that $0$ has degree $\deg 0=(i,j,k)$ for all $(i,j,k) \in \mathbb{N}^3$. Note that the elements of the field $K$ all have degree $(0,0,0)$. For each $(i,j,k) \in \mathbb{N}^3$, let $R_{i,j,k}$ denote the vector space over $K$ spanned by all the monomials of $R$ of degree $(i,j,k)$. The polynomial ring $R$ is then a \textit{trigraded ring} because there exists a direct sum decomposition $$ R = \bigoplus \limits_{(i,j,k)\in \mathbb{N}^3} R_{i,j,k}$$ such that $R_{i,j,k}R_{l,m,n} \subseteq R_{i+l,j+m,k+n}$ for all $(i,j,k),(l,m,n) \in \mathbb{N}^3$. An element $F \in R$ is \textit{trihomogeneous} (or simply, \textit{homogeneous}) if $F \in R_{i,j,k}$ for some $(i,j,k) \in \mathbb{N}^3$. An ideal $I = (F_1,F_2, \dots ,F_r) \subseteq R$ is a \textit{(tri)homogeneous ideal} if $F_i$ is (tri)homogeneous for all $i=1,2, \dots ,r$.

Let $I \subseteq R$ be a homogeneous ideal, and we let $I_{i,j,k} := I \cap R_{i,j,k}$ for all $(i,j,k) \in \mathbb{N}^3$. Because $I$ is homogeneous, the quotient ring $R/I$ also inherists a graded ring structure. In particular, we have: $$  R/I = \bigoplus \limits_{(i,j,k)\in \mathbb{N}^3} (R/ I)_{i,j,k} =  \bigoplus \limits_{(i,j,k)\in \mathbb{N}^3} R_{i,j,k}/ I_{i,j,k}.$$

\begin{definition}
Let $I_X \subseteq R$ be the homogeneous ideal defining a variety $X \subseteq \mathbb{P}^1 \times \mathbb{P}^1 \times \mathbb{P}^1$. We say that $X$ is \textit{arithmetically Cohen-Macaulay} (ACM) if $R/I_X$ is Cohen-Macaulay, i.e. $\text{depth}(R/I_X) = \text{Krull-dim}(R/I_X)$.
\end{definition}
\begin{definition}
    We say that a homogeneous ideal $J$ in a polynomial ring $S$ is \textit{Cohen-Macaulay} (CM) if $S/J$ is Cohen-Macaulay.
\end{definition}

A point in $\mathbb P^1\times\mathbb P^1\times\mathbb P^1$ is an ordered set of three points in $\mathbb P^1$. Say $P:=([a_0,a_1],[b_0,b_1],[c_0,c_1])\in \mathbb P^1\times\mathbb P^1\times\mathbb P^1,$ the defining ideal of $P$ is
$I_P:=(a_1x_{1,0}-a_0x_{1,1},b_1x_{2,0}-b_0x_{2,1},c_1x_{3,0}-c_0x_{3,1})$. Note that $I_P$ is a height three prime ideal generated by homogeneous linear forms of different degree.

Throughout the paper, linear forms are denoted by capital letters. In particular, we use $A_i$ to denote a linear form of degree $(1,0,0)$,  $B_j$ a linear form of degree $(0,1,0)$, and $C_k$ a linear form of degree $(0,0,1).$ We denote by $\mathcal{L}(A_i)$, $\mathcal{L}(B_j)$ and $\mathcal{L}(C_k)$ the respective hyperplanes of  $\mathbb{P}^1\times\mathbb{P}^1\times\mathbb{P}^1$, and we say that a hyperplane in $\mathbb P^1\times\mathbb P^1\times\mathbb P^1$ is of type $\bf{e}_i$ if it is defined by a form of degree $\bf{e}_i$.

We recall the following definition (see \cite{GV15}, Definition 2.2).

\begin{definition}
Let $F, G \in R$ be two homogeneous linear forms of different degree. In $\mathbb P^1\times\mathbb P^1\times\mathbb P^1$ the variety $\mathcal{L}$ defined by the ideal $(F,G)\subseteq R$ is called a \textit{line} of $\mathbb P^1\times\mathbb P^1\times\mathbb P^1$ and we denote it by $\mathcal{L}(F,G)$.

We say that a line $\mathcal{L}(F,G)$ is of type $\bf{e}_i+\bf{e}_j$, with $i\neq j,$ if $\{\deg F, \deg G\}=\{\bf{e}_i, \bf{e}_j\}$.
\end{definition}

In particular, if $A\in R_{1,0,0}$, $B\in R_{0,1,0}$ and $C\in R_{0,0,1}$, then we denote by $\mathcal{L}(A,B)$ the variety in $\mathbb P^1\times\mathbb P^1\times\mathbb P^1$ defined by the ideal $(A,B)\subseteq R$ and we call it \textit{line of type $(1, 1, 0)$}. Analogously,  we call the variety $\mathcal{L}(A,C)$ \textit{line of type $(1, 0,1)$}  and the variety $\mathcal{L}(B,C)$ \textit{line of type $(0, 1, 1)$}. We also refer to lines of type $\bf{e}_1+\bf{e}_2$, $\bf{e}_1+\bf{e}_3$ and $\bf{e}_2+\bf{e}_3$ by writing \textit{lines having direction} $\bf{e_3}$, $\bf{e_2}$ and $\bf{e_1}$, respectively.

\begin{definition}
We say that $X \subseteq \mathbb P^1\times\mathbb P^1\times\mathbb P^1$ is a \textit{variety of lines} if it is given by a finite union of distinct lines in  $\mathbb P^1\times\mathbb P^1\times\mathbb P^1$.

\end{definition}

\begin{definition}\label{Def:U_i(X)}
Given $X\subseteq \mathbb{P}^1\times\mathbb{P}^1\times\mathbb{P}^1$ a variety of lines, we denote by
$\mathcal{H}_1(X):=\{\mathcal{L}(A_1),\ldots, \mathcal{L}(A_{d_1})\}$,   $ \mathcal{H}_2(X):=\{\mathcal{L}(B_1),\ldots, \mathcal{L}(B_{d_2})\}$   and   $\mathcal{H}_3(X):=\{\mathcal{L}(C_1),\ldots, \mathcal{L}(C_{d_3})\}$ the hyperplanes of $\mathbb{P}^1\times\mathbb{P}^1\times\mathbb{P}^1$  containing some lines of $X$. In particular:
$$X:= \bigcup_{(i,j)\in U_3(X)} \mathcal{L}(A_i,B_j)\cup\bigcup_{(i,k)\in U_2(X)} \mathcal{L}(A_i,C_k)\cup \bigcup_{(j,k)\in U_1(X)} \mathcal{L}(B_j,C_k)$$
where $U_3(X)\subseteq [d_1]\times[d_2], U_2(X)\subseteq [d_1]\times[d_3]$ and $U_1(X)\subseteq [d_2]\times[d_3]$ are sets of ordered pairs of integers, with $[n]:=\{1,2,\ldots, n\}\subset \mathbb{N}.$

For $i=1, 2, 3$, we denote by $X_i$ the {set of lines of $X$ having direction $\bf{e}_i$}
and we call $U_i(X)$ the \textit{ index set of $X_i$}.
\end{definition}

Thus, the ideal defining $X$ is
$$I_X=\bigcap_{(i,j)\in U_3(X)} (A_i,B_j)\bigcap_{(i,k)\in U_2(X)} (A_i,C_k)\bigcap_{(j,k)\in U_1(X)} (B_j,C_k).$$

In this paper we are interested in a combinatorial characterization of ACM  varieties of lines in $\mathbb{P}^1\times\mathbb{P}^1\times\mathbb{P}^1$ and their Hilbert function.
In $\mathbb P^1\times \mathbb P^1$, to describe combinatorially ACM sets of points, it was crucial the definition of the so called \textit{Ferrers diagram} (see for instance \cite{GV-book}).

\begin{definition}
    A tuple $\lambda = (\lambda_1 , \dots , \lambda_r)$ of positive integers is a \textit{partition} of an integer $s$ if \ $\sum_{i=1}^{r} \lambda_i = s$ and $\lambda_i \geq \lambda_{i+1}$ for every $i$. We write $\lambda = (\lambda_1 , \dots , \lambda_r) \vdash s$.
\end{definition}

\begin{definition}\label{d.Ferr}
    To any partition $\lambda = (\lambda_1 , \lambda_2, \dots , \lambda_r) \vdash s$ we can associate the following diagram: on an $r \times \lambda_1$ grid, place $\lambda_1$ points on the first horizontal line, $\lambda_2$ points on the second, and so on, where the points are left justified. The resulting diagram is called the \textit{Ferrers diagram} of the partition $\lambda$.
\end{definition}

\begin{definition} Let $Y$ be a finite set of points in $\mathbb{P}^1 \times \mathbb{P}^1$. We say that \textit{$Y$ resembles a Ferrers diagram} if the set of points looks like a Ferrers diagram, i.e., after relabeling the horizontal and vertical rulings, we can assume that the first horizontal ruling contains the most number of points of $Y$, the second contains the same number or less of points of $Y$, and so on.
\end{definition}

Applying Lemma 3.17, Theorem 3.21 and Theorem 4.11 in \cite{GV-book}, we have

\begin{lemma} \label{FD} Let $Y$ be a finite set of points in $\mathbb{P}^1 \times \mathbb{P}^1$.  $Y$ is ACM if and only if $Y$ resembles a Ferrers diagram.
\end{lemma}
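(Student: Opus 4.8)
The plan is to treat this assertion as a repackaging of results in \cite{GV-book}: I would isolate the combinatorial data of $Y$ that governs its homology, reduce the two implications to statements about that data, and invoke the cited machinery for the two technical cores. Write $S=K[x_0,x_1,y_0,y_1]$ for the bigraded coordinate ring of $\mathbb P^1\times\mathbb P^1$ (so $\deg x_i=(1,0)$, $\deg y_j=(0,1)$) and $I_Y\subseteq S$ for the ideal of $Y$.

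First I would attach to $Y$ two partitions of $|Y|$: after relabelling the horizontal rulings $H_1,\dots ,H_r$ and the vertical rulings $V_1,\dots ,V_s$, set $\alpha_Y=(\alpha_1\ge\cdots\ge\alpha_r)$ with $\alpha_i=\#(Y\cap H_i)$ and $\beta_Y=(\beta_1\ge\cdots\ge\beta_s)$ with $\beta_j=\#(Y\cap V_j)$. The key preliminary observation is the combinatorial translation of the right-hand side of the Lemma: $Y$ resembles a Ferrers diagram if and only if $\beta_Y=\alpha_Y^\star$, the conjugate partition of $\alpha_Y$. Indeed the staircase attached to $\alpha_Y$ has column sizes $\alpha_Y^\star$, which gives one direction; conversely, for any incidence pattern with row sizes $\alpha_Y$ the column sizes are dominated by $\alpha_Y^\star$ (a Gale--Ryser type fact), with equality attained only by the staircase, so $\beta_Y=\alpha_Y^\star$ forces $Y$, up to relabelling, to be that staircase. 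This is essentially Theorem 4.11 of \cite{GV-book}.

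With that reformulation the Lemma reads ``$Y$ is ACM $\iff \beta_Y=\alpha_Y^\star$'', which is Theorem 3.21 of \cite{GV-book}. For ``$\Leftarrow$'' the previous paragraph lets me assume the occupied positions of $Y$ form the staircase of $\lambda:=\alpha_Y$; then $I_Y=\bigcap_{i=1}^r(L_i,g_i)$, where $L_i\in S$ has degree $(1,0)$ and defines $H_i$ while $g_i\in S$ has degree $(0,\lambda_i)$ and vanishes on the first $\lambda_i$ vertical rulings, so that $g_r\mid g_{r-1}\mid\cdots\mid g_1$. From this nested presentation I would run a short induction on $r$ that peels off $H_r$ through a Mayer--Vietoris sequence, producing a length-two bigraded free resolution of $S/I_Y$; hence $\mathrm{pd}(S/I_Y)=2$ and, by Auslander--Buchsbaum, $\text{depth}(S/I_Y)=2=\text{Krull-dim}(S/I_Y)$, i.e.\ $Y$ is ACM. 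For ``$\Rightarrow$'' I would argue contrapositively: if $\beta_Y\neq\alpha_Y^\star$, then the first difference of the bigraded Hilbert function of $Y$ cannot equal the function one would obtain by killing a length-two regular sequence of linear forms modulo $I_Y$; this is exactly the point at which Lemma 3.17 of \cite{GV-book}, which evaluates $H_Y$ ruling by ruling, does the work, yielding a bidegree in which the Hilbert function is too large for $S/I_Y$ to be Cohen--Macaulay.

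Chaining these gives $Y$ ACM $\iff \beta_Y=\alpha_Y^\star \iff Y$ resembles a Ferrers diagram. The step I expect to be the genuine obstacle is the forward implication just above: deducing the rigid equality $\beta_Y=\alpha_Y^\star$ from the bare hypothesis that $S/I_Y$ is Cohen--Macaulay requires controlling the entire Hilbert function (or minimal free resolution) of an a priori arbitrary configuration of points, whereas everything else is either an explicit computation on the staircase or an elementary manipulation of partitions.
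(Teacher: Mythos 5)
Your proposal is correct and is essentially the paper's own route: the paper gives no independent argument for this lemma, obtaining it directly from Lemma 3.17, Theorem 3.21 and Theorem 4.11 of \cite{GV-book}, and your reduction (resembling a Ferrers diagram $\iff \beta_Y=\alpha_Y^{\star}$, then the cited ACM characterization) delegates the two technical cores to exactly the same machinery, with your added combinatorial translation and resolution/Hilbert-function sketches consistent with how those cited results are proved.
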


We adapt Definition \ref{d.Ferr} to our context.
\begin{construction}  \label{construction} Let $X$ be a variety of lines in $\mathbb{P}^1 \times \mathbb{P}^1 \times \mathbb{P}^1$ and consider the set $X_3$ of lines of $X$ of type $(1,1,0)$ indexed by $U_3(X)\subseteq [d_1]\times [d_2]$. We represent $X_3$ as a $d_1 \times d_2$ grid, where the horizontal lines are labeled by the $\mathcal{L}(A_i)$'s for $i=1,\ldots, d_1$ and the vertical lines by the $\mathcal{L}(B_j)$'s for $j=1,\ldots, d_2$. By abuse of notation, we denote the horizontal lines by $\mathcal{L}(A_i)$ and the vertical lines by $\mathcal{L}(B_j)$.
Then, a line $\mathcal{L}(A_i,B_j) \in X_3$ is drawn as the intersection point of $\mathcal{L}(A_i)$ and $\mathcal{L}(B_j)$ in the grid. Similarly, we can construct a $d_1\times d_3$ grid  representing  $X_2$ and a $d_2\times d_3$ grid representing $X_1$.
\end{construction}

\begin{definition}\label{defferrer}
    Let $X$ be a variety of lines in $\mathbb{P}^1 \times \mathbb{P}^1 \times \mathbb{P}^1$ and $h\in\{1,2,3\}$. We say that
    $X$ \textit{resembles a Ferrers diagram with respect to the direction $h$} if the grid representing the lines of
    $X_h$, constructed as above,  resembles a Ferrers diagram.
\end{definition}

\begin{definition}\label{d.U Ferr}
    A finite subset $U = \{ (u_i,u_j) \} \subseteq \mathbb N^2$ \textit{resembles a Ferrers diagram} if it satisfies the following property: $$ (u_i,u_j) \in U \Rightarrow  (u_h,u_k) \in U \ \ \forall \ 1 \leq h \leq i \ , \ 1 \leq k \leq j. $$
\end{definition}

\begin{remark}\label{r. Ferr EQ}
    Note that Definition \ref{defferrer} is equivalent to say that the index set $U_h(X) \subset \mathbb{N}^2$ resembles a Ferrers diagram as Definition \ref{d.U Ferr}.
\end{remark}

\begin{remark}\label{r.cono} Construction \ref{construction} makes clear the connection between $X_h$ ($h\in \{1,2,3\}$) and a set
of points in $\mathbb{P}^1\times\mathbb{P}^1$.
$X_h$ is a cone of a set of distinct points on a hyperplane of $\mathbb{P}^1\times\mathbb{P}^1\times\mathbb{P}^1$.
So, we can look at it as a set of points in $\mathbb{P}^1\times\mathbb{P}^1$ with associated grid as described in the construction.  \end{remark}

\begin{example}\label{e.Ferr}
    Let $X$ be the following variety of 15 lines in $\mathbb{P}^1 \times \mathbb{P}^1 \times \mathbb{P}^1$:
    \begin{displaymath}
    \begin{split} X = & \mathcal{L}(A_1,B_2) \cup \mathcal{L}(A_1,B_4) \cup \mathcal{L}(A_1,B_5) \cup \mathcal{L}(A_2,B_2) \cup \mathcal{L}(A_2,B_3) \cup \\  & \cup\mathcal{L}(A_2,B_4) \cup \mathcal{L}(A_2,B_5) \cup \mathcal{L}(A_3,B_1) \cup \mathcal{L}(A_3,B_2) \cup \mathcal{L}(A_3,B_3) \cup \\ &\cup   \mathcal{L}(A_3,B_4)  \cup \mathcal{L}(A_3,B_5) \cup \mathcal{L}(A_4,B_4) \cup \mathcal{L}(B_1,C_1) \cup \mathcal{L}(B_2,C_2).
    \end{split}
    \end{displaymath}
    Then,
    \begin{displaymath}
    \begin{split}
    X_3= & \{\mathcal{L}(A_1,B_2), \mathcal{L}(A_1,B_4),  \mathcal{L}(A_1,B_5), \mathcal{L}(A_2,B_2),  \mathcal{L}(A_2,B_3),  \mathcal{L}(A_2,B_4),\mathcal{L}(A_2,B_5), \\ \phantom{=} &    \ \mathcal{L}(A_3,B_1),  \mathcal{L}(A_3,B_2),  \mathcal{L}(A_3,B_3),  \mathcal{L}(A_3,B_4),  \mathcal{L}(A_3,B_5),  \mathcal{L}(A_4,B_4)\}.
    \end{split}
    \end{displaymath}
    Using Construction \ref{construction},  $X_3$ is represented by a $4\times 5$ grid as Figure \ref{fig1}.
    After renaming, we see that $X_3$ resembles a Ferrers diagram of type $(5,4,3,1)$. Then, using Lemma \ref{FD}, $X_3$ is ACM (Figure \ref{fig2}).
      \begin{figure}[H]
        \begin{minipage}{.5\textwidth}
    \begin{tikzpicture}[scale=0.37]
            \draw (2,0.5) - - (2,9.5);
            \draw (4,0.5) - - (4,9.5);
            \draw (6,0.5) - - (6,9.5);
            \draw (8,0.5) - - (8,9.5);
            \draw (10,0.5) - - (10,9.5);

            \draw (0.5,2) - - (11.5,2);
            \draw (0.5,4) - - (11.5,4);
            \draw (0.5,6) - - (11.5,6);
            \draw (0.5,8) - - (11.5,8);
            \fill (8,2) circle (2mm);
            \fill (2,4) circle (2mm);
            \fill (10,6) circle (2mm);

            \fill (4,4) circle (2mm);
            \fill (4,6) circle (2mm);
            \fill (4,8) circle (2mm);
            \fill (6,4) circle (2mm);
            \fill (6,6) circle (2mm);
            \fill (6,4) circle (2mm);
            \fill (8,4) circle (2mm);
            \fill (10,4) circle (2mm);
            \fill (8,8) circle (2mm);
            \fill (10,8) circle (2mm);
            \fill (8,6) circle (2mm);

            \node [font=\footnotesize, text width=18mm] at (3,10) { $\mathcal{L}(B_1)$};

            \node [font=\footnotesize, text width=16mm] at (5,10) { $\mathcal{L}(B_2)$};

            \node [font=\footnotesize, text width=14mm] at (7,10) { $\mathcal{L}(B_3)$};

            \node [font=\footnotesize, text width=12mm] at (9,10) { $\mathcal{L}(B_4)$};

            \node [font=\footnotesize, text width=10mm] at (11,10) { $\mathcal{L}(B_5)$};

            \node [font=\footnotesize, text width=12mm] at (13.4,2) { $\mathcal{L}(A_4)$};

            \node [font=\footnotesize, text width=12mm] at (13.4,4) { $\mathcal{L}(A_3)$};

            \node [font=\footnotesize, text width=12mm] at (13.4,6) { $\mathcal{L}(A_2)$};

            \node [font=\footnotesize, text width=12mm] at (13.4,8) { $\mathcal{L}(A_1)$};
            \end{tikzpicture}
            \caption{ }  \label{fig1}
            \end{minipage}%
            \begin{minipage}{.5\textwidth}
            \begin{tikzpicture}[scale=0.37]

            \draw (2,0.5) - - (2,9.5);
            \draw (4,0.5) - - (4,9.5);
            \draw (6,0.5) - - (6,9.5);
            \draw (8,0.5) - - (8,9.5);
            \draw (10,0.5) - - (10,9.5);

            \draw (0.5,2) - - (11.5,2);
            \draw (0.5,4) - - (11.5,4);
            \draw (0.5,6) - - (11.5,6);
            \draw (0.5,8) - - (11.5,8);
            \fill (2,2) circle (2mm);
            \fill (2,4) circle (2mm);
            \fill (2,6) circle (2mm);
            \fill (2,8) circle (2mm);
            \fill (4,4) circle (2mm);
            \fill (4,6) circle (2mm);
            \fill (4,8) circle (2mm);
            \fill (6,4) circle (2mm);
            \fill (6,6) circle (2mm);
            \fill (6,8) circle (2mm);

            \fill (8,6) circle (2mm);
            \fill (8,8) circle (2mm);
            \fill (10,8) circle (2mm);

            \node [font=\footnotesize, text width=18mm] at (3,10) { $\mathcal{L}(\overline{B}_1)$};

            \node [font=\footnotesize, text width=16mm] at (5,10) { $\mathcal{L}(\overline{B}_2)$};

            \node [font=\footnotesize, text width=14mm] at (7,10) { $\mathcal{L}(\overline{B}_3)$};

            \node [font=\footnotesize, text width=12mm] at (9,10) { $\mathcal{L}(\overline{B}_4)$};

            \node [font=\footnotesize, text width=10mm] at (11,10) { $\mathcal{L}(\overline{B}_5)$};

            \node [font=\footnotesize, text width=12mm] at (13.4,2) { $\mathcal{L}(\overline{A}_4)$};

            \node [font=\footnotesize, text width=12mm] at (13.4,4) { $\mathcal{L}(\overline{A}_3)$};

            \node [font=\footnotesize, text width=12mm] at (13.4,6) { $\mathcal{L}(\overline{A}_2)$};

            \node [font=\footnotesize, text width=12mm] at (13.4,8) { $\mathcal{L}(\overline{A}_1)$};
            \end{tikzpicture}\caption{} \label{fig2}
        \end{minipage}
        \end{figure}
 \end{example}\begin{example}    Let $X$ be the variety of lines as in Example \ref{e.Ferr}.
 We have $X_1=  \{\mathcal{L}(B_1,C_1), \mathcal{L}(B_2,C_2)\}$ and  the $2\times 2$ grid representing
 $X_1$ does not resemble any Ferrers diagram (Figure \ref{fig3}). Thus $X$ does not resemble a Ferrers diagram with respect to the direction $1$.
 Hence, from Lemma \ref{FD}, $X_1$ is not ACM.
    \begin{figure}[H]
        \centering
        \begin{tikzpicture}[scale=0.35]

        \draw (2,4.5) - - (2,9.5);
        \draw (4,4.5) - - (4,9.5);

        \draw (0.5,6) - - (6,6);
        \draw (0.5,8) - - (6,8);
        \fill (2,8) circle (2mm);

        \fill (4,6) circle (2mm);

        \node [font=\footnotesize, text width=18mm] at (3,10) { $\mathcal{L}(B_1)$};

        \node [font=\footnotesize, text width=16mm] at (5,10) { $\mathcal{L}(B_2)$};




        \node [font=\footnotesize, text width=12mm] at (8,6) { $\mathcal{L}(A_2)$};

        \node [font=\footnotesize, text width=12mm] at (8,8) { $\mathcal{L}(A_1)$};
        \end{tikzpicture}
        \caption{} \label{fig3} \end{figure}
\end{example}

Since Ferrers diagrams play a crucial role in the characterization of the ACM property for a finite set of points in $\mathbb P^1\times \mathbb P^1$ (see for instance \cite{GV15}), it is natural for us to investigate the same property for a variety of lines  $X\subseteq \mathbb P^1\times \mathbb P^1\times \mathbb P^1$ (since $X$ has also codimension 2). In the next section, we will show that the ACM property of $X$ depends on the $X_i$ (see Corollary \ref{cor. Xi}), but the ACM-ness of the $X_i$ is not sufficient to ensure that $X$ is also ACM (see Remark \ref{rem. Xi}).

\section{A combinatorial characterization of ACM varieties of lines }\label{sec:comb_char}

In this section, we study the ACM property for varieties of lines from a combinatorial point of view. We refer to \cite{HH} for all the introductory material on monomial ideals.

The next lemma can be recovered from \cite{VT03}, Proposition 3.2.

\begin{lemma}\label{l.reg sequence}
    Let $X\subseteq \mathbb P^{1}\times\mathbb P^{1}\times\mathbb P^{1}$ be a variety of lines. Then, there exist three forms $A,B$ and $C$ of degree $(1,0,0), (0,1,0)$ and $(0,0,1)$, respectively, such that $(\bar A,\bar B,\bar C)$ is a regular sequence in $R/I_X.$
\end{lemma}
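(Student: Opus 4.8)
## Proof Strategy

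The plan is to find a single linear form of each of the three degrees that avoids all the "bad" hyperplanes forced by the components of $X$, and then argue that the three images form a regular sequence by a dimension count. The key observation is that $X$ is a finite union of lines, so it involves only finitely many hyperplanes of each type: the forms $A_1,\dots,A_{d_1}$ of degree $(1,0,0)$, the forms $B_1,\dots,B_{d_2}$ of degree $(0,1,0)$, and $C_1,\dots,C_{d_3}$ of degree $(0,0,1)$ appearing in Definition~\ref{Def:U_i(X)}. Since $K$ is infinite (it is algebraically closed of characteristic zero), the vector space $R_{1,0,0}$ is two-dimensional over $K$ and cannot be covered by the finitely many one-dimensional subspaces $\langle A_i\rangle$; hence I can choose $A\in R_{1,0,0}$ that is not a scalar multiple of any $A_i$, and similarly for $B$ and $C$.

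First I would set up the geometry: each irreducible component of $X$ is a line $\mathcal{L}(F,G)$ with $\{\deg F,\deg G\}$ a two-element subset of $\{\mathbf{e}_1,\mathbf{e}_2,\mathbf{e}_3\}$, so $X$ has pure dimension $1$ and $R/I_X$ has Krull dimension $2$ (codimension two in the $6$-dimensional ring $R$, consistent with the variety of lines sitting in $\mathbb{P}^1\times\mathbb{P}^1\times\mathbb{P}^1$ of multiprojective dimension $3$). Regularity of a length-three sequence in a ring of dimension $2$ cannot be phrased via $\operatorname{depth}$ directly; instead I would work in the standard homogenization / cone picture, viewing $R/I_X$ as the homogeneous coordinate ring of the affine cone over $X$, which has dimension $3$. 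Equivalently, one checks that $\bar A$ is a nonzerodivisor on $R/I_X$, that $\bar B$ is a nonzerodivisor on $R/(I_X+(A))$, and that $\bar C$ is a nonzerodivisor on $R/(I_X+(A,B))$.

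For the first step, $\bar A$ is a nonzerodivisor on $R/I_X$ precisely when $A$ lies in no associated prime of $I_X$. The associated primes of $I_X$ are the ideals $(A_i,B_j)$, $(A_i,C_k)$, $(B_j,C_k)$ of the components. A degree-$(1,0,0)$ form $A$ lies in $(A_i,B_j)$ iff $A$ is a scalar multiple of $A_i$, and it lies in $(B_j,C_k)$ never (such a prime contains no nonzero form of degree $(1,0,0)$). So choosing $A\notin\bigcup_i\langle A_i\rangle$ handles this. For the second step, $I_X+(A)$ should be analyzed: intersecting each component ideal with $(A)$, the components of type $(1,1,0)$ and $(1,0,1)$ become the ideal $(A,B_j)$ or $(A,C_k)$ (height two, still proper since $A\neq A_i$), while components $(B_j,C_k)$ become $(A,B_j,C_k)$ (height three). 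One checks the associated primes of $R/(I_X+(A))$ are among $(A,B_j)$, $(A,C_k)$, $(A,B_j,C_k)$, and that a suitably generic $B$ of degree $(0,1,0)$ avoids all of them — it lies in $(A,B_j)$ only if $B\in\langle B_j\rangle$. The third step is analogous with $C$. The main obstacle I anticipate is bookkeeping the associated primes of the successive quotients $I_X+(A)$ and $I_X+(A,B)$ carefully — in particular verifying that no new embedded components of the wrong type appear after adding $A$ and $B$ — rather than any deep argument; the genericity choices themselves are routine once the list of relevant primes is pinned down. Alternatively, and perhaps more cleanly, I would invoke Proposition~3.2 of \cite{VT03} essentially verbatim: that result already guarantees such a regular sequence for ACM-type unions of linear varieties in multiprojective space, and our $X$ is exactly a union of coordinate-type lines, so the proof reduces to observing that the hypotheses there are met.
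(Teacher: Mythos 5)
Your first step coincides with the paper's: choose $A\in R_{1,0,0}$ outside the finitely many lines $\langle A_1\rangle,\dots,\langle A_{d_1}\rangle$ (possible since $K$ is infinite) and conclude it is a nonzerodivisor because each component ideal $I_{\mathcal L}$ is prime and does not contain $A$; the ``a vector space over an infinite field is not a finite union of proper subspaces'' device is also the paper's. But your treatment of $I_X+(A)$ contains both an error and a genuine gap. The error: adding $A$ to a component $(A_i,B_j)$ with $A\notin\langle A_i\rangle$ gives $(A,A_i,B_j)=(x_{1,0},x_{1,1},B_j)$, a height-three prime, not the height-two ideal $(A,B_j)$ (similarly for $(A_i,C_k)$); this is repairable, since these primes still meet $R_{0,1,0}$ only in $\langle B_j\rangle$ (resp.\ $\{0\}$). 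The gap is the point you yourself flag and then leave open: to get a $B\in R_{0,1,0}$ that is a nonzerodivisor modulo $I_X+(A)$ you must exclude \emph{embedded} associated primes of $I_X+(A)$ whose degree-$(0,1,0)$ piece is all of $R_{0,1,0}$ (for instance one containing $(x_{2,0},x_{2,1})$); if such a prime occurred, no form of degree $(0,1,0)$ whatsoever would do. Saying the bookkeeping is ``routine'' is not an argument, and the same issue recurs, in aggravated form, for $C$ modulo $I_X+(A,B)$, where after two hyperplane sections products of the component ideals genuinely can acquire embedded primes.

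This missing step is exactly where the paper's proof does its real work: it invokes Cohen--Macaulayness of $J:=I_X+(A)$ (using the ACM property of $X$, which it imports at this stage even though it is not in the statement --- the lemma is only applied in the paper to ACM varieties of lines), so $J$ is unmixed of height three; hence every associated prime $\mathfrak p$ of $J$ contains some $I_{\mathcal L}$ with $\mathcal L\in X$ and therefore equals the height-three prime $I_{\mathcal L}+(A)$, whose degree-$(0,1,0)$ component is a proper subspace of $R_{0,1,0}$, and the finite-union-of-subspaces argument then produces $B$ (and analogously $C$). Without some substitute for this unmixedness, or an explicit determination of the associated primes of $I_X+(A)$ and $I_X+(A,B)$, your genericity claims for $B$ and $C$ are unsupported; likewise the fallback appeal to \cite{VT03}, Proposition 3.2 ``verbatim'' presupposes the ACM-type hypothesis of that result, which you have neither assumed nor verified. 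A minor further point: $R/I_X$ has Krull dimension $4$ ($I_X$ has height $2$ in the six-variable ring $R$), not $2$ or $3$; this does not damage your plan, which correctly reduces the statement to the three successive nonzerodivisor checks, but the dimension counts as written are off.
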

\begin{proof}
    Let $A\in R_{1,0,0}$ be such that $\mathcal{L}(A)\notin \mathcal H_1(X)$. We claim that $\bar A$ is a nonzero divisor of $R/I_X$. Indeed, take $F\in R$ a homogeneous form such that  $AF\in I_X$. Then $AF\in I_{\mathcal L}$, for any line $\mathcal L\in X$. Since $I_{\mathcal L}$ is a prime ideal and $A\notin I_{\mathcal L}$, then we get $F\in I_{\mathcal L}$, for any $\mathcal L\in X$.

    Now we prove the existence of the linear form $B.$ Since $X$ is ACM, then $J:=I_X+(A)$ is CM. Moreover, $J$ is homogeneous and its height is 3. Take the primary decomposition of $J$, say $J=\mathfrak q_1\cap \cdots \cap \mathfrak q_t $, and let $\mathfrak p_i:=\sqrt{\mathfrak q_i}$ for $i=1,\ldots, t.$
    The set of the nonzero divisors of $R/J$ is then  $\bigcup_{i}  \bar{ \mathfrak p}_i$.
    In order to prove that there exists an element $B\in R_{0,1,0}$ nonzero divisor of $R/J$, it is enough to show that $(\bigcup_{i}  { \mathfrak p}_i)_{0,1,0}\subsetneq R_{0,1,0}.$
    Since $R_{0,1,0}$ is a $K$-vector space over an infinite field, it is not a union of a finite number of its proper subspaces, and so it is enough to show that  $ (\mathfrak p_i)_{0,1,0}\subsetneq R_{0,1,0}$ for each $i=1,\ldots, t.$

    Let $i\in \{1,\ldots, t\}$, then we have  $I_X\subseteq J \subseteq \mathfrak p_i$. Therefore, there exists $\mathcal L\in X$ such that $I_X\subseteq I_{\mathcal L}\subseteq \mathfrak p_i$. This implies $\mathfrak p_i= I_{\mathcal L}+(A).$  Since $I_{\mathcal L}\neq R_{0,1,0}$ we are done.

    Analogously we prove the existence of a form $C\in R_{0,0,1}.$
\end{proof}

We set the notation for this section.
    Let $X$ be a variety of lines and $I_X$ its defining ideal
    $$I_X=\bigcap_{(i,j)\in U_3(X)} (A_i,B_j)\bigcap_{(i,k)\in U_2(X)} (A_i,C_k)\bigcap_{(j,k)\in U_1(X)} (B_j,C_k)\subseteq R.$$
    We construct a new polynomial ring in $d_1 + d_2 + d_3$ variables each of them corresponding to a hyperplane containing some lines of $X.$
    We denote by $S:=K[a_1, \ldots, a_{d_1}$, $b_1, \ldots, b_{d_2}, c_1, \ldots, c_{d_3}]$ the polynomial ring in $d_1 + d_2 + d_3$ variables and $\deg a_i=(1,0,0)$, $\deg b_j=(0,1,0)$, $\deg c_k=(0,0,1)$.
    We set
    $$J_X=\bigcap_{(i,j)\in U_3(X)} (a_i,b_j)\bigcap_{(i,k)\in U_2(X)} (a_i,c_k)\bigcap_{(j,k)\in U_1(X)} (b_j,c_k)\subseteq S.$$
    $J_X$ is a height 2 monomial ideal of $S$ and its associated primes correspond to the components of $X$.

The next lemma is crucial since, as its consequence, we can  connect homological invariants between  ACM varieties of lines and  some height 2 monomial ideals. Similar arguments were also used in \cite{FGM} (see proof of Theorem 3.2).

\begin{lemma}\label{monomial} Let $X$ be a variety of lines in $\mathbb{P}^1\times\mathbb{P}^1\times\mathbb{P}^1$. Then
 $X$ is ACM if and only if $J_X \subseteq S $ is CM.

\end{lemma}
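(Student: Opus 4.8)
The plan is to realize $R/I_X$ as a quotient of a polynomial extension of $S/J_X$ by a regular sequence, in the spirit of the proof of \cite[Theorem 3.2]{FGM}, and then transfer the Cohen--Macaulay property across.

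First I would reduce to the case $d_1,d_2,d_3\ge 2$: adjoining to $S$, if necessary, extra variables corresponding to forms $A_i$ (resp.\ $B_j$, $C_k$) cutting out hyperplanes that contain no line of $X$ merely replaces $S/J_X$ by a polynomial ring over it, so it changes neither the hypothesis nor the conclusion. Then fix $A_1,A_2$ as a $K$-basis of $R_{1,0,0}$, $B_1,B_2$ of $R_{0,1,0}$, $C_1,C_2$ of $R_{0,0,1}$. Since two distinct hyperplanes of the same type are cut out by linearly independent forms, for $i\ge 3$ we may write $A_i=\ell_i$ for a linear form $\ell_i\in R_{1,0,0}$ in $A_1,A_2$, with $\ell_i,\ell_{i'}$ linearly independent whenever $i\ne i'$; similarly for the $B$'s and $C$'s. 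Consider the graded $K$-algebra map $\varphi\colon S\to R$ with $a_i\mapsto A_i$, $b_j\mapsto B_j$, $c_k\mapsto C_k$. It is surjective, and $\mathfrak a:=\ker\varphi$ is generated by the $N:=(d_1-2)+(d_2-2)+(d_3-2)$ linearly independent linear forms $a_i-\ell_i$, $b_j-\ell_j$, $c_k-\ell_k$ ($i,j,k\ge 3$); hence $\mathfrak a$ is generated by a regular sequence in $S$ and $S/\mathfrak a\cong R$.

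The argument then rests on two claims: (a) $\varphi(J_X)=I_X$; and (b) the generators of $\mathfrak a$ form a regular sequence on $S/J_X$. Granting these, $R/I_X\cong S/(J_X+\mathfrak a)$, which is obtained from $S/J_X$ by killing the images of the generators of $\mathfrak a$; since by (b) these images form a regular sequence, $\dim(R/I_X)=\dim(S/J_X)-N$ and $\text{depth}(R/I_X)=\text{depth}(S/J_X)-N$ (consistently, $\dim(S/J_X)=d_1+d_2+d_3-2$ forces $\dim(R/I_X)=4$). Therefore $R/I_X$ is Cohen--Macaulay if and only if $S/J_X$ is; i.e.\ $X$ is ACM if and only if $J_X$ is CM.

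To prove (a) and (b) I would use that $J_X$ is a squarefree monomial ideal with associated primes the monomial primes $M_\alpha\in\{(a_i,b_j),(a_i,c_k),(b_j,c_k)\}$ of the components of $X$, each of which involves at most one $a$-variable, at most one $b$-variable, and at most one $c$-variable. Via the Mayer--Vietoris sequence applied to $J_X=\bigcap_\alpha M_\alpha S$ (and to its intermediate partial intersections), both claims reduce to showing that the generators of $\mathfrak a$, taken in the order ``the $a$-relations, then the $b$-relations, then the $c$-relations'', form a regular sequence modulo every squarefree monomial ideal of the form $\bigl(\bigcap_{\alpha\in T}M_\alpha\bigr)S+M_\beta S$ arising when a single component $M_\beta$ is separated off. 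The minimal primes of such an ideal are of the form $M_\alpha+M_\beta$, hence involve at most two $a$-variables; and if two of them, $a_i$ and $a_{i'}$ with $i\ne i'$, both occur, then $\ell_i$ and $\ell_{i'}$ are linearly independent --- this is exactly where the distinctness of the hyperplanes of $X$ is used --- so imposing the two relations $a_i-\ell_i$ and $a_{i'}-\ell_{i'}$ annihilates both $A_1$ and $A_2$, after which any further relation $a_m-\ell_m$ becomes $a_m$, which is still a nonzerodivisor because $a_m$ lies in none of these primes. The same bookkeeping in the (disjoint) $b$- and $c$-blocks completes the check.

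The main obstacle is precisely this last verification: the forms $A_1,\dots,A_{d_1}$ span only the $2$-dimensional space $R_{1,0,0}$, so any three of them are linearly dependent, and one must make sure this degeneracy never turns a generator of $\mathfrak a$ into a zerodivisor along the way. What makes it go through is that no associated prime encountered in the relevant Mayer--Vietoris steps ever contains three $a$-variables simultaneously --- each component of $X$ contributes at most one, and only one component is removed per step --- so the two independent relations that can ever be forced among $A_1,A_2$ are harmless for the remaining relations. Once this is in place, (a), (b), and hence the equivalence follow.
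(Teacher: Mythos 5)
There is a genuine gap: your two pillars (a) $\varphi(J_X)=I_X$ and (b) ``the generators of $\mathfrak a$ form a regular sequence on $S/J_X$'' are asserted unconditionally, but they are false for a general (non-ACM) variety of lines, so the equivalence cannot be obtained by exhibiting an unconditional isomorphism $R/I_X\cong S/(J_X+\mathfrak a)$. Take $X=\mathcal L(A_1,B_1)\cup\mathcal L(A_2,B_2)\cup\mathcal L(A_3,B_3)$ with $A_3=A_1+A_2$ and $B_3=B_1+B_2$. Then $J_X=(a_1,b_1)\cap(a_2,b_2)\cap(a_3,b_3)$ is generated by eight squarefree monomials of total degree $3$, so $\varphi(J_X)$ is generated in total degree $3$; but $I_X$ contains the form $A_1B_2-A_2B_1$ of degree $(1,1,0)$ (it lies in each of $(A_1,B_1)$, $(A_2,B_2)$, $(A_1+A_2,B_1+B_2)$), hence $\varphi(J_X)\subsetneq I_X$ and (a) fails. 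Your Mayer--Vietoris verification breaks at exactly the step you flagged as the main obstacle: with $T=\{1,2\}$, $\beta=3$, the element $a_3-(a_1+a_2)$ is indeed regular modulo $(a_1,b_1)\cap(a_2,b_2)+(a_3,b_3)$, but after dividing by it the resulting ideal is (up to the substitution $a_2\mapsto -a_1$, $a_3,b_3\mapsto 0$) the ideal $(a_1^2,a_1b_1,a_1b_2,b_1b_2)$, which has the whole maximal ideal $(a_1,b_1,b_2)$ as an embedded associated prime; consequently $b_3-(b_1+b_2)$ is a zerodivisor there. So the failure mode is not ``three $a$-variables in one minimal prime'': it is that embedded primes created after the first specialization can contain an entire block of variables, and regularity of a sequence is governed by associated primes of the intermediate quotients, not by the minimal primes of the monomial ideals you start from.

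This is precisely why the paper's proof uses a Cohen--Macaulay hypothesis on one side in each direction: when $X$ is ACM, Lemma \ref{l.reg sequence} and the Hilbert--Burch structure of codimension-two CM ideals (via \cite{FRZ} or \cite{Naeem}, leading to Corollary \ref{c.monomial}) guarantee that $I_X$ is generated by products of the linear forms $A_i,B_j,C_k$, which is exactly what makes your claim (a) true; and when $J_X$ is CM, unmixedness of $S/J_X$ and of its successive hyperplane sections is what makes the specializing linear forms a regular sequence cutting out $I_X$. Your argument can likely be repaired by running it under one of these hypotheses at a time (as the paper does with the ring $T=S[x_{i,j}]$ and the forms $a_i-A_i$, $b_j-B_j$, $c_k-C_k$), but as written the unconditional claims (a) and (b) are not correct, and the example above shows no amount of block-by-block bookkeeping will establish them.
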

\begin{proof}

    Set $T := S[x_{1,0}, x_{1,1}, x_{2,0}, x_{2,1}, x_{3,0}, x_{3,1}]$. Consider $J_X$ as an ideal, say $\overline{J}_X$, in the ring $T$. Since $J_X$ is a height 2 monomial ideal in $S$, then  $\overline{J}_X$, being a cone, continues to be a height 2 monomial ideal. Moreover, $\overline{J}_X$ has the same primary decomposition as $J_X$.
    Consider the linear forms $a_i-A_i$, $b_j-B_j$, $c_k-C_k$ and let $L$ be the ideal generated by all these linear forms.

    Assume $J_X$ is CM. Thus, in the quotient $T/(\overline{J}_X, L)$  we can view the
    addition of each linear form in $L$ as a proper hyperplane section.
    We have that $R/I_X$ and $T/(\overline{J}_X,L)$ both have height 2 and $ R/I_X\cong T/(\overline{J}_X, L)$. Then, since $J_X$ is CM, we get $X$ is ACM.

    On the other hand, if $X$ is ACM, then, applying Lemma \ref{l.reg sequence}, there exists a sequence of linear forms  $(A,B,C)\subseteq R$ that is regular in the quotient $R/I_X$.
    Let $\mathfrak q:= (A,B,C)\subseteq R$ be the ideal generated by these three linear forms. Consider the ideal $(I_X+\mathfrak q)/\mathfrak q \subseteq  R/\mathfrak q$, that can be viewed  as a codimension 2 monomial ideal in a polynomial ring in three variables.  Since a Hilbert-Burch matrix of $I_X$ has the same \lq\lq structure\rq\rq as the Hilbert-Burch matrix of a monomial ideal, i.e. it is a matrix with only two non zero entries in each column (see for instance Lemma 3.21 in \cite{FRZ} or Theorem 1.5. in \cite{Naeem}), then $I_X$ is generated by some products among the linear forms defining the lines of $X$. Since the addition of each linear form in $L$ can be seen as a proper hyperplane section, we also have $ R/I_X\cong T/(\overline{J}_X, L)$. Then $J_X$ is CM.

\end{proof}

\begin{corollary}\label{c.monomial} Let $X$ be an ACM variety of lines in $\mathbb{P}^1\times\mathbb{P}^1\times\mathbb{P}^1$. Then $I_X$ is generated by products of linear forms.

\end{corollary}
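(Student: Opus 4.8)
The plan is to derive this as an immediate consequence of Lemma \ref{monomial} together with the structural information about the Hilbert--Burch matrix already exploited in its proof. First I would recall that, since $X$ is ACM of codimension $2$, the ring $R/I_X$ is Cohen--Macaulay of codimension $2$, so by the Hilbert--Burch theorem $I_X$ has a minimal free resolution of length $2$ and is generated by the maximal minors of its Hilbert--Burch matrix $M$. The key point, already noted in the proof of Lemma \ref{monomial} (citing Lemma 3.21 in \cite{FRZ} or Theorem 1.5 in \cite{Naeem}), is that this matrix can be taken to have exactly two nonzero entries in each column, and moreover these entries are (up to scalars) among the linear forms $A_i, B_j, C_k$ defining the hyperplanes $\mathcal{H}_1(X) \cup \mathcal{H}_2(X) \cup \mathcal{H}_3(X)$ that contain the lines of $X$. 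Granting that, each maximal minor of $M$ expands, after deleting a row, as an alternating sum of products of entries, but because every column has only two nonzero entries, each such minor is (up to sign) a single product of two of these linear forms — indeed one checks inductively that a matrix with at most two nonzero entries per column has monomial maximal minors.

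Concretely, the steps in order are: (1) invoke the Hilbert--Burch theorem to write $I_X = I_t(M)$ for the $(t+1)\times t$ Hilbert--Burch matrix $M$ of $R/I_X$; (2) quote the structural result that $M$ may be chosen with two nonzero entries per column, each a scalar multiple of one of the forms $A_i,B_j,C_k$ — this is exactly the "same structure as the Hilbert--Burch matrix of a monomial ideal" statement used in Lemma \ref{monomial}, and one transports it across the isomorphism $R/I_X \cong T/(\overline{J}_X, L)$ of that proof, where $J_X$ is a genuine monomial ideal whose Hilbert--Burch matrix is literally monomial; (3) conclude that each generator of $I_X$, being a maximal minor of such an $M$, is a product of linear forms among the defining forms of the lines of $X$. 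Alternatively, and perhaps more cleanly, I would argue directly through the isomorphism: by Lemma \ref{monomial}, $J_X$ is CM, hence its Hilbert--Burch matrix $N$ over $S$ has monomial entries, so $I_t(N)$ is generated by monomials in the $a_i,b_j,c_k$; substituting $a_i \mapsto A_i$, $b_j \mapsto B_j$, $c_k \mapsto C_k$ via the isomorphism $R/I_X \cong T/(\overline{J}_X,L)$ carries this Hilbert--Burch presentation of $J_X$ to one of $I_X$, so the generators of $I_X$ are exactly the images of those monomials, i.e. products of the linear forms $A_i,B_j,C_k$.

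The main obstacle is making precise step (2) — that the generators one obtains are genuinely \emph{products of linear forms defining the lines of $X$}, rather than merely products of \emph{some} linear forms in $R$. This requires knowing that the Hilbert--Burch matrix of $I_X$ can be chosen with entries drawn from the specific finite set $\{A_i\} \cup \{B_j\} \cup \{C_k\}$, which is where the comparison with the monomial ideal $J_X$ does the real work: the entries of a Hilbert--Burch matrix of the monomial ideal $J_X$ are monomials in the $a_i,b_j,c_k$, and in fact (since $J_X$ has height $2$ and all its associated primes are generated by pairs of variables) these monomials are themselves variables; pulling back along $a_i \mapsto A_i$ etc. keeps them linear. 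So the corollary is really just the observation that the isomorphism of Lemma \ref{monomial} is compatible with minimal free resolutions, and hence transports the (obviously monomial) generators of $J_X$ to products of linear forms generating $I_X$. I would keep the write-up to a couple of sentences citing Lemma \ref{monomial} and the Hilbert--Burch structure, since all the substantive content has already been established there.
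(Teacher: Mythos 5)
Your proposal is correct and takes essentially the same route as the paper: the corollary is stated there without a separate proof, being read off from the step inside the proof of Lemma \ref{monomial} (via Lemma 3.21 of \cite{FRZ} or Theorem 1.5 of \cite{Naeem}) that a Hilbert--Burch matrix of $I_X$ has the same structure as that of a codimension two monomial ideal, with only two nonzero entries per column, so its maximal minors are products of the linear forms defining the lines of $X$ --- exactly your step (2)--(3). One small caution: your ``alternative'' route relies on the isomorphism $R/I_X \cong T/(\overline{J}_X,L)$, which in the ACM-to-CM direction the paper only obtains as a consequence of that same Hilbert--Burch structure claim, so it is your primary argument, not the alternative, that avoids circularity.
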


As a consequence of Lemma \ref{monomial}, it is interesting to further investigate the structure of the monomial ideal $J_X$ associated to $X$.
Now we recall only a few definitions we will use in the sequel.
We refer to \cite{HH,VT} for all preliminaries and for further results on graphs.

A (simple) graph $G$ is a pair $G =(V, E)$, where $V:=\{v_1,\ldots, v_N\}$ is a set of vertices of $G$ and $E$ is a collection of 2-subsets of $V,$ called the edges of $G$. The complementary graph of $G$, denoted by $G^c,$ is the graph $G^c = (V, E^c)$, where $E^c = \{\{v_i, v_j \}\ |\ \{v_i, v_j \} \notin E\}$.
 A sequence of vertices of $G$, $(v_1, v_2, \cdots, v_t)$, is a cycle of length $t$ if $\{v_1, v_2\},\{v_2, v_3\},\dots,\{v_t, v_1\}\in E$. A chord is an edge joining two not adjacent vertices in a cycle. A minimal cycle is a cycle without chords.
 A graph $G$ is called chordal when all its minimal cycles have length three.  We associate to a graph $G=(V,E)$ two squarefree monomial ideals in the ring $K[v_1, \ldots, v_N]$, the face ideal of $G$
$$I(G)=(v_iv_j | \{v_i,v_j\}\in E)$$
and the cover ideal of $G$
$$J(G)=\bigcap\limits_{\{v_i,v_j\}\in E }(v_i,v_j).$$
It is a well known fact that  $I(G)$ and $J(G)$ are the Alexander dual each of the other.
In the sequel we will use the following results:

\begin{theorem}[\cite{Fr}, Theorem 1] \label{th.Fr}
    Let $G$ be a graph. Then $I(G)$ has a linear resolution if and only if $G^c$ is a chordal graph.
\end{theorem}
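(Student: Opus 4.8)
The statement is the classical theorem of Fr\"oberg; the plan is to reproduce the standard proof, which splits the two implications between Hochster's formula and the notion of linear quotients. Throughout, write $S=K[v_{1},\dots,v_{N}]$ and recall that the face ideal $I(G)$ is the Stanley--Reisner ideal of the independence complex $\Delta:=\operatorname{Ind}(G)$, whose faces are the independent sets of $G$; since $I(G)$ is generated in degree $2$, saying that it has a linear resolution is the same as saying $\beta_{i,j}(S/I(G))=0$ for all $i\ge 1$ and $j\ne i+1$. I would establish ``$G^{c}$ not chordal $\Rightarrow$ no linear resolution'' by exhibiting one explicit nonlinear syzygy, and ``$G^{c}$ chordal $\Rightarrow$ linear resolution'' by ordering the generators of $I(G)$ so as to get linear quotients, the order being read off a perfect elimination ordering of $G^{c}$.

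For the first implication I would argue contrapositively. If $G^{c}$ is not chordal, it has a minimal cycle of length $k\ge 4$; let $W$ be the set of its $k$ vertices, so that the induced subgraph $G[W]$ is the complement $\overline{C_{k}}$ of the $k$-cycle. The independent sets of $\overline{C_{k}}$ are precisely the cliques of $C_{k}$, i.e. $\emptyset$, the $k$ vertices and the $k$ edges of $C_{k}$ (nothing larger, since $C_{k}$ is triangle-free for $k\ge 4$), so $\operatorname{Ind}(\overline{C_{k}})$ is the one-dimensional complex realizing the cycle $C_{k}$, which is homotopy equivalent to $S^{1}$; hence $\widetilde{H}_{1}(\operatorname{Ind}(\overline{C_{k}});K)\ne 0$. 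Since the full subcomplex of $\operatorname{Ind}(G)$ on $W$ is $\operatorname{Ind}(G[W])$, Hochster's formula $\beta_{i,j}(S/I_{\Delta})=\sum_{W'\subseteq V,\,|W'|=j}\dim_{K}\widetilde{H}_{j-i-1}(\Delta_{W'};K)$ (with $\Delta_{W'}$ the full subcomplex on $W'$), applied with $W'=W$, $j=k$ and $i=k-2$, gives $\beta_{k-2,\,k}(S/I(G))\ne 0$; as $k\ne(k-2)+1$, this lies off the linear strand, so $I(G)$ has no linear resolution.

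For the converse, assume $G^{c}$ is chordal and fix a perfect elimination ordering $v_{1},\dots,v_{N}$ of $G^{c}$; unwinding the definitions, this is equivalent to: whenever $\{v_{p},v_{q}\}\in E(G)$ with $i<p$ and $i<q$, at least one of $v_{p},v_{q}$ is adjacent in $G$ to $v_{i}$. I would list the minimal generators $v_{a}v_{b}$ of $I(G)$ (with $\{v_{a},v_{b}\}\in E(G)$ and $a<b$) in the lexicographic order of the pairs $(a,b)$ and claim this gives linear quotients. Fix a generator $u=v_{a}v_{b}$ and examine the colon of $u$ by all earlier generators: an earlier generator meeting $\{v_{a},v_{b}\}$ in exactly one vertex contributes the variable indexed by its other endpoint, while an earlier generator $v_{c}v_{d}$ disjoint from $\{a,b\}$ contributes the quadric $v_{c}v_{d}$. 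In the latter case, being earlier forces $\min(c,d)<a$; writing $c=\min(c,d)$ we get $c<a<b$, so the edge $\{v_{a},v_{b}\}$ lies entirely among the vertices after $v_{c}$, and the elimination property at $v_{c}$ gives $v_{c}\sim_{G}v_{a}$ or $v_{c}\sim_{G}v_{b}$. The corresponding edge $\{v_{c},v_{a}\}$ (or $\{v_{c},v_{b}\}$) precedes $u$ in our order and already contributes the variable $v_{c}$ to the colon ideal, so the quadric $v_{c}v_{d}$ is redundant. Hence every colon is generated by variables, i.e. $I(G)$ has linear quotients; and a monomial ideal generated in a single degree with linear quotients has a linear resolution.

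The delicate point, where I expect the real work to sit, is the colon computation just sketched: one must run through the possible ways an earlier minimal generator can meet $v_{a}v_{b}$ and check, in each case, that the lexicographic order is exactly the one making the needed edge earlier and that the elimination property is being invoked at the correct (smallest) vertex. As an alternative to the linear-quotients argument for the converse, one could pass through Alexander duality: by the Eagon--Reiner theorem $I(G)$ has a linear resolution iff the Stanley--Reisner ring of the dual complex $\Delta^{\vee}$ (whose facets are the complements of the edges of $G$) is Cohen--Macaulay, and this can be verified directly for chordal $G^{c}$; but the route above is more elementary and stays inside the combinatorics already set up.
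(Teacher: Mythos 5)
Your argument is correct, but note that the paper itself offers no proof of this statement: it is quoted verbatim as Theorem~1 of Fr\"oberg's paper \cite{Fr} and used as a black box, so there is no internal proof to compare against. What you have written is a sound rendition of the standard modern proof. For the ``only if'' direction, restricting to the vertex set $W$ of a chordless cycle of length $k\ge 4$ in $G^c$ gives $\Delta_W=\operatorname{Ind}(\overline{C_k})$, the simplicial circle, and Hochster's formula with $j=k$, $i=k-2$ produces the nonlinear Betti number $\beta_{k-2,k}(S/I(G))\ne 0$; the indexing and the identification of the full subcomplex are both right. For the converse, your reformulation of a perfect elimination ordering of $G^c$ in terms of $G$ is correct, and the colon computation goes through: if an earlier generator $v_cv_d$ is disjoint from $u=v_av_b$, lex order forces $c<a<b$, the elimination property at $v_c$ yields an edge $\{v_c,v_a\}$ or $\{v_c,v_b\}$ of $G$ whose generator precedes $u$ and already puts $v_c$ in the colon, so every colon is generated by variables and linear quotients give a linear resolution. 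Two remarks: you are silently invoking the Dirac--Fulkerson--Gross theorem that chordal graphs admit perfect elimination orderings (a genuine external ingredient, though entirely standard, just as Fr\"oberg's theorem is for the paper); and the Eagon--Reiner alternative you mention at the end is exactly Theorem~\ref{th.ER}, which the paper uses alongside this statement rather than as a route to it.
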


\begin{theorem}[\cite{ER}, Theorem 3] \label{th.ER}
    Let $G$ be a graph. Then $I(G)$ has a linear resolution if and only if $J(G)$ is CM.
\end{theorem}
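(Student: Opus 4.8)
The plan is to obtain this as a special case of the Eagon--Reiner theorem on Alexander duality, combining Terai's formula with the Auslander--Buchsbaum equality. Concretely, $I(G)$ is the Stanley--Reisner ideal $I_{\operatorname{Ind}(G)}$ of the independence complex of $G$, and $J(G)=I(G)^{\vee}=I_{\operatorname{Ind}(G)^{\vee}}$ (this is exactly the Alexander duality recalled in the text), so the statement is a comparison of the two dual complexes $\operatorname{Ind}(G)$ and $\operatorname{Ind}(G)^{\vee}$. Write $S=K[v_{1},\dots,v_{N}]$ and assume $G$ has at least one edge (otherwise the assertion is vacuous). Two elementary remarks are in order. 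First, every minimal generator of $I(G)$ has degree $2$, so $I(G)$ has a linear resolution if and only if $\operatorname{reg}(I(G))=2$. Second, $J(G)=\bigcap_{\{v_{i},v_{j}\}\in E}(v_{i},v_{j})$ is a finite intersection of height-two monomial primes, so its minimal primes are among those primes; hence $\operatorname{ht} J(G)=2$ and $\dim S/J(G)=N-2$.

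The core of the argument is then a short chain of equivalences. By Terai's formula, for any squarefree monomial ideal $I\subseteq S$ one has $\operatorname{reg}(I)=\operatorname{pd}(S/I^{\vee})$; applied to $I=I(G)$ and using $I(G)^{\vee}=J(G)$, this gives $\operatorname{reg}(I(G))=\operatorname{pd}(S/J(G))$. On the other hand, by Auslander--Buchsbaum $\operatorname{pd}(S/J(G))=N-\operatorname{depth}(S/J(G))\ge N-\dim(S/J(G))=2$, with equality if and only if $\operatorname{depth}(S/J(G))=\dim(S/J(G))$, i.e. if and only if $S/J(G)$ is Cohen--Macaulay. Stringing these together, $I(G)$ has a linear resolution $\iff \operatorname{reg}(I(G))=2 \iff \operatorname{pd}(S/J(G))=2 \iff J(G)$ is CM, which is the claim.

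The substantive obstacle is Terai's formula itself (equivalently, the Eagon--Reiner theorem); everything above is bookkeeping. To make the proof self-contained I would derive it from Hochster's formula, which expresses the $\mathbb{Z}^{N}$-graded Betti number $\beta_{i,W}(S/I_{\Delta})$, for $W\subseteq\{1,\dots,N\}$, as $\dim_{K}\widetilde H_{|W|-i-1}(\Delta|_{W};K)$, together with combinatorial Alexander duality relating the reduced (co)homology of $\Delta|_{W}$ to that of $(\Delta^{\vee})|_{W}$. Matching the two Betti tables term by term, and reading off $\operatorname{reg}$ on the side of $I_{\Delta}$ and $\operatorname{pd}$ on the side of $S/I_{\Delta^{\vee}}$, yields $\operatorname{reg}(I_{\Delta})=\operatorname{pd}(S/I_{\Delta^{\vee}})$. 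Since here the result is invoked directly via \cite{ER}, the three-line deduction above is all that is needed; the derivation of Terai's formula is the route I would take only if a fully self-contained proof were required.
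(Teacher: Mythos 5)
Your argument is correct. Note, though, that the paper offers no proof of this statement at all: it is quoted verbatim as Theorem 3 of the Eagon--Reiner paper \cite{ER} and used as a black box (together with Fr\"oberg's theorem) in the proof of Theorem \ref{thm:ch1}. So the comparison is really with the original source rather than with anything in the text. Your route --- identify $J(G)$ as the Alexander dual of the edge ideal, invoke Terai's formula $\operatorname{reg}(I_\Delta)=\operatorname{pd}(S/I_{\Delta^{\vee}})$, and convert $\operatorname{pd}(S/J(G))=2$ into Cohen--Macaulayness via Auslander--Buchsbaum and the observation $\operatorname{ht}J(G)=2$ --- is a standard and complete derivation, and the bookkeeping steps (linear resolution $\iff\operatorname{reg}=2$ for an ideal generated in degree $2$, minimal primes of $J(G)$ of height $2$, the degenerate edgeless case set aside) are all handled correctly. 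The original Eagon--Reiner proof does not pass through Terai's formula (which postdates it) but argues directly with Hochster's formula and combinatorial Alexander duality, essentially matching the Betti numbers of $I_\Delta$ against Reisner's criterion for $\Delta^{\vee}$; your sketched reduction of Terai's formula to Hochster's formula plus Alexander duality is exactly that argument in disguise, so the two approaches differ only in packaging: yours buys a cleaner three-line deduction at the price of quoting the stronger regularity--projective-dimension duality, while the Eagon--Reiner route is self-contained at the level of Hochster's formula.
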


\begin{remark}\label{r.simpl compl} Let $X$ be a variety of lines. Let
    $G_X=(V_X, E_X)$ be the graph with vertex set $$V_X:=\{a_1, \ldots, a_{d_1}, b_1, \ldots, b_{d_2}, c_1, \ldots, c_{d_3} \}$$ and edge set
    \begin{displaymath}
    \begin{split}
    E_X := & \big\{ \{a_i,b_j\}\subseteq V_X\ |\ \mathcal{L}(A_i,B_j)\in X_3\big\} \cup \\
     \cup & \big\{ \{a_i,c_k\}\subseteq V_X\ |\ \mathcal{L}(A_i,C_k)\in X_2\big\}\cup \\
    \cup & \big\{ \{b_j,c_k\}\subseteq V_X\ |\ \mathcal{L}(B_j,C_k)\in X_1\big\}.
    \end{split}
    \end{displaymath}
    Then, we note that the monomial ideal $J_X$ is the cover ideal of the graph $G_X$: $$J_X = J(G_X)\subseteq S,$$ that is the Stanley-Reisner ideal of the simplicial complex (see Lemma 1.5.4. in \cite{HH})
    $$\Delta_X:= \langle V_X \setminus e\ |\ e \in  E_X \rangle.$$
\end{remark}

An useful application of Remark \ref{r.simpl compl} is the following lemma.

\begin{lemma}\label{remove-hyp}Let $X$ be an ACM variety of lines in $\mathbb{P}^1\times\mathbb{P}^1\times\mathbb{P}^1$ and let $\mathcal H\subseteq \mathbb{P}^1\times\mathbb{P}^1\times\mathbb{P}^1$ be a hyperplane containing some lines of $X$. Then the variety of lines $Y= \{ \mathcal{L} \in X \ | \ \mathcal{L} \not\subset \mathcal{H}\}$ is ACM.
\end{lemma}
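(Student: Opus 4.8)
The plan is to translate the geometric operation of "removing all lines contained in a hyperplane $\mathcal H$" into a combinatorial operation on the graph $G_X$, and then invoke the chain of equivalences Lemma~\ref{monomial}, Theorem~\ref{th.ER}, Theorem~\ref{th.Fr}. Since $\mathcal H$ contains some lines of $X$, it is one of the hyperplanes $\mathcal L(A_i)$, $\mathcal L(B_j)$ or $\mathcal L(C_k)$; say without loss of generality $\mathcal H = \mathcal L(A_1)$, corresponding to the vertex $a_1 \in V_X$. A line $\mathcal L \in X$ is contained in $\mathcal H$ precisely when one of the two defining forms of $\mathcal L$ is $A_1$, i.e.\ when the corresponding edge of $G_X$ is incident to $a_1$. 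Hence the variety of lines $Y = \{\mathcal L \in X \mid \mathcal L \not\subset \mathcal H\}$ has associated graph $G_Y = G_X \setminus a_1$, the induced subgraph obtained by deleting the vertex $a_1$ together with all edges incident to it. (One should note $G_Y$ may have isolated vertices among the remaining $a_i,b_j,c_k$; this is harmless, isolated vertices do not affect whether $J(G_Y)$ is CM up to a polynomial extension, or one simply restricts the vertex set of $S$ to the hyperplanes actually meeting $Y$.)

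First I would record the graph-theoretic fact that a vertex-deletion passes to complements as a vertex-deletion: $(G_X \setminus a_1)^c = G_X^c \setminus a_1$, the induced subgraph of $G_X^c$ on $V_X \setminus \{a_1\}$. Then I would use the standard and easy fact that an induced subgraph of a chordal graph is chordal: any minimal cycle in $G_X^c \setminus a_1$ is a cycle in $G_X^c$ with no chord, hence has length three.

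The main step is then the string of equivalences. Since $X$ is ACM, Lemma~\ref{monomial} gives that $J_X = J(G_X)$ is CM; by Theorem~\ref{th.ER} the face ideal $I(G_X)$ has a linear resolution; by Theorem~\ref{th.Fr} the complement $G_X^c$ is chordal. By the previous paragraph $G_Y^c = G_X^c \setminus a_1$ is chordal, so running the implications backwards — Theorem~\ref{th.Fr} gives $I(G_Y)$ has a linear resolution, Theorem~\ref{th.ER} gives $J(G_Y) = J_Y$ is CM, and Lemma~\ref{monomial} gives that $Y$ is ACM. The general case $\mathcal H = \mathcal L(B_j)$ or $\mathcal L(C_k)$ is identical, deleting the vertex $b_j$ or $c_k$ instead.

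The only real point requiring care — the potential obstacle — is the bookkeeping around the ambient polynomial ring: after deleting the lines in $\mathcal H$, some hyperplanes that contributed vertices to $V_X$ may no longer contain any line of $Y$, so the ``correct'' ring $S_Y$ for $Y$ has fewer variables than $S$. This is resolved by observing that adjoining or removing a variable that does not appear in a monomial ideal preserves the Cohen--Macaulay property (equivalently, one checks CM-ness of $J(G_Y)$ in the subring generated by the non-isolated vertices of $G_Y$), and that Remark~\ref{r.simpl compl} and Theorems~\ref{th.Fr}, \ref{th.ER} are insensitive to such extra variables. Apart from this routine adjustment, the argument is a direct application of the machinery already set up.
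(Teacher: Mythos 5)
Your proposal is correct, but it follows a genuinely different route from the paper. The paper stays entirely on the Cohen--Macaulay side: writing $z$ for the variable of $S$ corresponding to $\mathcal H$, it observes that $J_X:z$ is the intersection of the associated primes of $J_X$ not containing $z$ (hence equals $J_Y$) and is the Stanley--Reisner ideal of $\link_{\Delta_X} z$; since $J_X$ is CM by Lemma~\ref{monomial}, the CM-ness of $J_X:z$ follows from the fact that links of faces of Cohen--Macaulay complexes are Cohen--Macaulay (Corollary~8.1.8 in \cite{HH}), and Lemma~\ref{monomial} converts back. You instead pass through Alexander duality and chordality: Lemma~\ref{monomial}, Theorem~\ref{th.ER} and Theorem~\ref{th.Fr} give that $G_X^c$ is chordal, you delete the vertex of $\mathcal H$, use that complementation commutes with vertex deletion and that induced subgraphs of chordal graphs are chordal, and run the equivalences backwards --- in effect applying twice the argument that later becomes Theorem~\ref{thm:ch1} (there is no circularity, since all the ingredients you cite precede Lemma~\ref{remove-hyp} in the paper). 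Each approach has its advantages: the paper's colon-ideal formulation is shorter and sidesteps the change of ambient ring, because $J_X:z$ lives in the same ring $S$; your version makes the combinatorial content transparent (ACM-ness is hereditary because chordality is hereditary under taking induced subgraphs), at the cost of the bookkeeping about hyperplanes that no longer meet $Y$ and the resulting superfluous variables, which you correctly dispose of by noting that variables not appearing in a monomial ideal do not affect Cohen--Macaulayness.
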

\begin{proof}
    Let $H$ be the linear form defining $\mathcal H$. Denoted by $z$ the variable of $S$ corresponding to $H$ (the linear form $H$ is one of the forms $A_i, B_j, C_k$ and $z$ is the corresponding variable among $a_i, b_j, c_k$).
    We have
    \begin{itemize}
        \item[i)] $J_X:z= \bigcap\limits_{\tiny \begin{array}{c}
            \mathfrak p\in \ass(J_X)\\ z\notin \mathfrak p\\
            \end{array}}\!\!\!\mathfrak p.$ Both are monomial ideals, so the equality easily follows by checking the inclusions for monomials.  
        \item[ii)]  $J_X:z$ is the Stanley-Reisner ideal of the simplicial complex $\link_{\Delta_X} z$ (see sections 1.5.2 and 8.1.1 in \cite{HH}). Indeed, the Stanley-Reisner ideal of the $link$ of $z$ in $\Delta_X$ is generated by monomials corresponding to the elements $F\subseteq V_X$ such that  $\{z\}\cup F\notin \Delta_X$. All these monomials are in $J_X:z= I_{\Delta_X}:z$ and vice versa.
        \end{itemize}
    Then, in order to prove the statement, it is enough to show that $J_X:z$ is CM.
    From Lemma \ref{monomial}, we have that $J_X$ is CM, so the statement follows by Corollary 8.1.8 in \cite{HH}.
\end{proof}

\begin{corollary} \label{cor. Xi}
    If $X$ is an ACM variety of lines, then $X$ resembles a Ferrers diagram with respect to the direction $h$, for each $h = 1,2,3.$
\end{corollary}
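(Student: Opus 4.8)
The plan is to fix $h\in\{1,2,3\}$ and, by the symmetry of the three factors, to treat only the case $h=3$. The idea is to delete from $X$, one hyperplane at a time, every line that is not of direction $\mathbf{e}_3$; by Lemma \ref{remove-hyp} each deletion preserves the ACM property, so we end up with an ACM copy of $X_3$, which by Remark \ref{r.cono} is a cone over a set of points in $\mathbb{P}^1\times\mathbb{P}^1$ to which Lemma \ref{FD} applies.

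Concretely, I would first note that a line $\mathcal{L}\in X$ is \emph{not} of direction $\mathbf{e}_3$ exactly when it is contained in one of the hyperplanes $\mathcal{L}(C_1),\dots,\mathcal{L}(C_{d_3})$ of $\mathcal{H}_3(X)$: the lines of type $(1,0,1)$ and $(0,1,1)$ are $\mathcal{L}(A_i,C_k)\subset\mathcal{L}(C_k)$ and $\mathcal{L}(B_j,C_k)\subset\mathcal{L}(C_k)$, while a line $\mathcal{L}(A_i,B_j)$ of type $(1,1,0)$ lies in no hyperplane of type $\mathbf{e}_3$, since $(A_i,B_j)$ contains no nonzero form of degree $(0,0,1)$. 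Moreover, since the hyperplanes $\mathcal{L}(C_1),\dots,\mathcal{L}(C_{d_3})$ are pairwise distinct, each line $\mathcal{L}(A_i,C_k)$ or $\mathcal{L}(B_j,C_k)$ is contained in exactly one of them, namely $\mathcal{L}(C_k)$. Hence, setting $Y^{(0)}:=X$ and $Y^{(k)}:=\{\mathcal{L}\in Y^{(k-1)}\mid \mathcal{L}\not\subset\mathcal{L}(C_k)\}$ for $k=1,\dots,d_3$, no line of $X$ contained in $\mathcal{L}(C_k)$ is removed before the $k$-th step, so $\mathcal{L}(C_k)$ still contains a line of the ACM variety $Y^{(k-1)}$ (it contains one of $X$ by definition of $\mathcal{H}_3(X)$), and Lemma \ref{remove-hyp} applies at every step. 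Therefore $Y^{(d_3)}$ is ACM, and by the first observation $Y^{(d_3)}=X_3$. This bookkeeping — checking that the peeling removes precisely $X_1\cup X_2$ and that each hyperplane in the chosen order still meets the current variety — is the only delicate point; everything else is immediate from the quoted lemmas.

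To finish, I would invoke Remark \ref{r.cono}: writing $R':=K[x_{1,0},x_{1,1},x_{2,0},x_{2,1}]$, the ideal $I_{X_3}=\bigcap_{(i,j)\in U_3(X)}(A_i,B_j)$ is extended from the ideal $I_Y\subseteq R'$ of a finite set $Y$ of distinct points of $\mathbb{P}^1\times\mathbb{P}^1$, so $R/I_{X_3}\cong (R'/I_Y)[x_{3,0},x_{3,1}]$ and $Y$ is ACM if and only if $X_3$ is. Thus $Y$ is ACM, and Lemma \ref{FD} gives that $Y$ resembles a Ferrers diagram. Since the grid attached to $X_3$ in Construction \ref{construction} is precisely the grid of $Y$, Definition \ref{defferrer} now says that $X$ resembles a Ferrers diagram with respect to the direction $3$; the cases $h=1,2$ are obtained by permuting the factors.
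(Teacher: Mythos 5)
Your argument is correct and follows essentially the same route as the paper's proof: strip away the lines lying in the hyperplanes of one fixed type using Lemma \ref{remove-hyp} (the paper does this in one stroke for the $\mathcal{L}(A_i)$'s to get $X_1$, you iterate carefully over the $\mathcal{L}(C_k)$'s to get $X_3$), conclude that $X_h$ is an ACM cone via Remark \ref{r.cono}, and then apply the Ferrers characterization of Lemma \ref{FD} together with Remark \ref{r. Ferr EQ}. Your explicit bookkeeping for the iterated application of Lemma \ref{remove-hyp} is just a more detailed version of the step the paper leaves implicit.
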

\begin{proof}
    We show that $U_1(X)$ resembles a Ferrers diagram. Analogously, one can show the same for $U_2(X)$ and $U_3(X)$. Let us consider the variety of lines $X_1$ consisting of the lines of $X$ of type $(0,1,1).$  Since $I_{X_1}= I_{X\setminus\{\mathcal L (A_1 ), \ldots, \mathcal{L}(A_{d_1})  \} }$, $X_1$  preserves the ACM property by Lemma \ref{remove-hyp}. Moreover, $X_1=\bigcup\limits_{(j,k)\in U_1(X)}\mathcal{L}(B_j,C_k)$, i.e., it is a cone of an ACM set of distinct points on a hyperplane of $\mathbb{P}^1\times\mathbb{P}^1\times\mathbb{P}^1$, see Remark \ref{r.cono}.  A well known characterization, see for instance Theorem 4.11 in \cite{GV-book}, shows that this set of points resembles a Ferrers diagram. Using Remark \ref{r. Ferr EQ}, $U_1(X)$ resembles a Ferrers diagram.
    Then, the statement follows from Lemma \ref{FD}.

\end{proof}

\begin{remark} \label{rem. Xi}
From previous corollary, if there exists $i \in \{1,2,3\}$ such that $X_i$ is not ACM, then $X$ is not ACM. The following example shows that even if all $X_i$ are ACM $X$ could be not ACM.

\end{remark}

\begin{example} \label{ex not acm}
Let us consider the following variety of lines in $\mathbb{P}^1 \times \mathbb{P}^1 \times \mathbb{P}^1$:
        \begin{displaymath}
        \begin{split} X = \{ \mathcal{L}(A_1,B_1), \mathcal{L}(A_2,B_2), \mathcal{L}(B_3, C_3)\}.
        \end{split}
        \end{displaymath}
It is clear that the sets $X_1, X_2$ and $X_3$ resemble a Ferrers diagram, so each of them is ACM. But, in this case, $X$ is not ACM. This follows for instance from Lemma \ref{monomial} and from  \cite{HH}, Lemma 9.1.12.
\end{example}

    The next definition introduces a property for varieties of lines in  $\mathbb{P}^1 \times \mathbb{P}^1 \times \mathbb{P}^1$ in analogy to the known ($\star$)-property defined for sets of points in $\mathbb{P}^1 \times \mathbb{P}^1$ (see \cite{GuVT2012a}).

    \begin{definition}\label{def:star}
            Let $X\subseteq \mathbb{P}^1\times\mathbb{P}^1\times\mathbb{P}^1$ be a variety of lines. We say that $X$ has the \textit{($\star$)-property} (or explicitly, \textit{star property}) if given any two lines $L_1$, $ L_2 \in X$, there exists $L_3 \in X$ such that  $L_1, L_3$ and $L_2, L_3$ are coplanar.
    \end{definition}

    We slight generalize this property for varieties of lines.
    \begin{definition}\label{def:chordal}
        Let $X\subseteq \mathbb{P}^1\times\mathbb{P}^1\times\mathbb{P}^1$ be a variety of lines. Let $n \geq 4$, $n \in \mathbb{N}$, we say that $X$ has the \textit{$n$-hyperplanes ($\star$) property} (for short, $Hyp_n(\star)$-property) if given $n$ hyperplanes $H_1, H_2, \ldots, H_n$ such that $\mathcal{L}(H_i, H_j)\in X$ for any $j\neq i-1, i, i+1$ then $\mathcal{L}(H_u, H_{u+1})\in X$ for some $u\in \{ 1,2, \dots, n \}$, where $H_0 = H_n$ and $H_{n+1} = H_1$.
    \end{definition}

    \begin{remark} \label{n*}
    Note that if $n>6$, then $X$ has the $Hyp_n(\star)$-property. Indeed, among  $n>6$ hyperplanes there are at least three
            of the same type and so the condition $\mathcal{L}(H_i, H_j)\in X$ for any $j\neq i-1, i, i+1$ (where $H_0=H_n$ and $H_{n+1} =H_1$) fails to be true.

    \end{remark}
    \begin{remark}
        Note that the $Hyp_4(\star)$-property is equivalent to $(\star)$-property as Definition \ref{def:star}.
    \end{remark}

    \begin{example}
    Let us consider the following variety of lines in $\mathbb{P}^1 \times \mathbb{P}^1 \times \mathbb{P}^1$:
    \begin{displaymath} \begin{split} X = & \mathcal{L}(A_1,B_1) \cup \mathcal{L}(A_1,B_2) \cup \mathcal{L}(A_1,B_3) \cup \mathcal{L}(A_2,B_1) \cup \\ \cup & \mathcal{L}(A_2,B_2) \cup \mathcal{L}(A_1,C_1) \cup \mathcal{L}(A_1,C_2) \cup \mathcal{L}(A_2,C_1) \cup \\ \cup & \mathcal{L}(B_1,C_1) \cup \mathcal{L}(B_1,C_2) \cup \mathcal{L}(B_2,C_1) \cup \mathcal{L}(B_3,C_1). \end{split} \end{displaymath}
    \noindent $X$ has the $Hyp_4 (\star$)-property. Indeed, if we take the $4$ hyperplanes $\mathcal{L}(A_1)$, $ \mathcal{L}(A_2)$, $\mathcal{L}(B_1), \mathcal{L}(B_2)$, we have that $\mathcal{L}(A_1,B_1), \mathcal{L}(A_2,B_2) \in X$ and also $\mathcal{L}(A_1, B_2) \in X$; if we take the $4$ hyperplanes $\mathcal{L}(A_1), \mathcal{L}(A_2)$, $\mathcal{L}(B_1), \mathcal{L}(C_1)$, we have that $\mathcal{L}(A_1,B_1)$, $\mathcal{L}(A_2,C_1) \in X$ and also $\mathcal{L}(B_1, C_1) \in X$; and so on, if we take any two lines in $X$, there exists a third line in $X$ that is coplanar with the other two.
    \end{example}

    \begin{example}
        Let us consider the following variety of lines in $\mathbb{P}^1 \times \mathbb{P}^1 \times \mathbb{P}^1$:
        \begin{displaymath}
        \begin{split} X = & \mathcal{L}(A_1,B_1) \cup \mathcal{L}(A_1,B_2) \cup \mathcal{L}(A_1,B_3) \cup \mathcal{L}(A_2,B_2) \cup \mathcal{L}(A_1,C_1) \\ \cup &  \mathcal{L}(A_1,C_2) \cup \mathcal{L}(A_2,C_1) \cup \mathcal{L}(B_1,C_1) \cup \mathcal{L}(B_3,C_1).
        \end{split}
        \end{displaymath}
    \noindent $X$ has the $Hyp_5 (\star$)-property. Indeed, if we take the $5$ hyperplanes $\mathcal{L}(A_1)$, $ \mathcal{L}(A_2)$, $\mathcal{L}(B_1), \mathcal{L}(B_2)$, $\mathcal{L}(C_1)$ we have that the lines $\mathcal{L}(A_1,B_1), \mathcal{L}(A_1,B_2), \mathcal{L}(A_2,B_2)$, $\mathcal{L}(A_2,C_1)$, $\mathcal{L}(B_1,C_1) \in X$ and also $\mathcal{L}(A_1, C_1) \in X$; if we take the $5$ hyperplanes $\mathcal{L}(A_1), \mathcal{L}(A_2)$, $\mathcal{L}(B_3), \mathcal{L}(B_2),  \mathcal{L}(C_1)$, we have that $\mathcal{L}(A_1,B_3)$, $\mathcal{L}(A_1,B_2)$, $\mathcal{L}(A_2,B_2)$, $\mathcal{L}(A_2,C_1)$, $\mathcal{L}(B_3,C_1) \in X$ and also $\mathcal{L}(A_1, C_1) \in X$; and so on, if we take any 5 hyperplanes $H_1, \dots , H_5$ among $\mathcal{L}(A_1), \mathcal{L}(A_2), \mathcal{L}(B_1), \mathcal{L}(B_2), \mathcal{L}(B_3), \mathcal{L}(C_1), \mathcal{L}(C_2)$ such that $\mathcal{L}(H_i, H_j)\in X$ for any $j\neq i-1, i, i+1$, then there exists $u\in \{ 1, \dots , 5 \}$ such that $\mathcal{L}(H_u, H_{u+1})\in X$, where $H_0 = H_5$ and $H_{6} = H_1$. Note that if we take $\mathcal{L}(B_1), \mathcal{L}(B_2), \mathcal{L}(B_3)$ among the 5 hyperplanes we choose, the condition $\mathcal{L}(H_i, H_j)\in X$ for any $j\neq i-1, i, i+1$ fails to be true and then there is nothing to verify.
    \end{example}

The following theorem is the main result of this section.

\begin{theorem}\label{thm:ch1}
        Let $X$ be a variety of lines.  Then $X$ is ACM if and only if $X$ has the $Hyp_n(\star)$-property for $n= 4,5,6$.
\end{theorem}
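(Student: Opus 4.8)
The plan is to push the statement through the results already assembled in this section until it becomes a purely combinatorial fact about the complement of the graph $G_X$ of Remark~\ref{r.simpl compl}, and then to recognize the $Hyp_n(\star)$-property as precisely the obstruction to chordality of that complement.

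\emph{Step 1 (reduction to chordality).} By Lemma~\ref{monomial}, $X$ is ACM if and only if $J_X$ is CM; by Remark~\ref{r.simpl compl}, $J_X = J(G_X)$ is the cover ideal of $G_X$; by Theorem~\ref{th.ER}, $J(G_X)$ is CM if and only if $I(G_X)$ has a linear resolution; and by Theorem~\ref{th.Fr}, the latter holds if and only if $G_X^c$ is chordal. So it is enough to prove that $G_X^c$ is chordal if and only if $X$ has the $Hyp_n(\star)$-property for $n=4,5,6$.

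\emph{Step 2 (a bound on chordless cycles).} The graph $G_X$ is tripartite with parts $\{a_1,\dots,a_{d_1}\}$, $\{b_1,\dots,b_{d_2}\}$, $\{c_1,\dots,c_{d_3}\}$, since $E_X$ joins only hyperplanes of different types; hence $G_X^c$ induces a complete graph on each of these three parts. If $C$ is a chordless cycle of $G_X^c$ of length $\ge 4$, then $C$ cannot contain three vertices of the same part: those three would be pairwise adjacent in $G_X^c$, and chordlessness would force all three of these edges to be cycle edges, i.e.\ $C$ a triangle. Therefore a chordless cycle of $G_X^c$ of length $\ge 4$ has length $4$, $5$ or $6$, and $G_X^c$ is chordal if and only if it has no chordless cycle of length $4$, $5$ or $6$.

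\emph{Step 3 (translating chordless $n$-cycles).} Fix $n\in\{4,5,6\}$. We may assume the hyperplanes $H_1,\dots,H_n$ appearing in the statement of $Hyp_n(\star)$ are distinct vertices of $G_X$: if some $H_i$ contains no line of $X$, or if two non-consecutive $H_i$'s coincide, then a required membership $\mathcal{L}(H_i,H_j)\in X$ fails and there is nothing to check; while if two consecutive $H_i$'s coincide, the hypothesis already forces the conclusion, since a required ``diagonal'' $\mathcal{L}(H_i,H_j)\in X$ then coincides with a ``side''. For distinct vertices, the hypothesis ``$\mathcal{L}(H_i,H_j)\in X$ for all $j\ne i-1,i,i+1$'' says exactly that every non-consecutive pair $\{H_i,H_j\}$ lies in $E_X$, i.e.\ is a non-edge of $G_X^c$; and the failure of the conclusion, ``$\mathcal{L}(H_u,H_{u+1})\notin X$ for all $u$'' (with $H_0=H_n$, $H_{n+1}=H_1$), says that every consecutive pair $\{H_u,H_{u+1}\}$ is an edge of $G_X^c$ — here one uses that two hyperplanes of the same type also satisfy $\mathcal{L}(H_u,H_{u+1})\notin X$ and are adjacent in $G_X^c$. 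Thus $X$ fails $Hyp_n(\star)$ precisely when $G_X^c$ contains a chordless cycle of length $n$; combining with Step~2, $X$ has the $Hyp_n(\star)$-property for $n=4,5,6$ if and only if $G_X^c$ is chordal, hence if and only if $X$ is ACM.

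I expect Step~3 to be the main obstacle: the dictionary has to be set up carefully, keeping track of the cyclic conventions $H_0=H_n$ and $H_{n+1}=H_1$, the distinctness of the $H_i$, and the degenerate case of two hyperplanes of the same type, for which $\mathcal{L}(H_u,H_{u+1})$ is not a line at all but the corresponding vertices are still adjacent in $G_X^c$. Step~2, which is what makes the finite list $\{4,5,6\}$ suffice (compare Remark~\ref{n*}), is by contrast a short argument from the clique structure of $G_X^c$ on each part.
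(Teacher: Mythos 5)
Your proposal is correct and follows essentially the same route as the paper: the chain Lemma~\ref{monomial} $\to$ Remark~\ref{r.simpl compl} $\to$ Theorem~\ref{th.ER} $\to$ Theorem~\ref{th.Fr} reducing ACM-ness to chordality of $G_X^c$, with the restriction to $n=4,5,6$ being the content of Remark~\ref{n*}. Your Steps~2 and~3 merely make explicit the dictionary between chordless cycles of $G_X^c$ and failures of the $Hyp_n(\star)$-property, which the paper asserts without detail, so this is a (carefully checked) elaboration rather than a different argument.
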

\begin{proof}
Let $I_X$ be the ideal defining the variety of lines $X \subseteq \mathbb{P}^1 \times \mathbb{P}^1 \times \mathbb{P}^1$.
From Lemma \ref{monomial}, $X$ is ACM if and only if $J_X \subseteq S$ is CM. From Remark \ref{r.simpl compl}, the ideal $J_X$ is the cover ideal of the graph $G_X$, i.e., $J_X = J(G_X)$. From Theorem \ref{th.ER}, the face ideal $I(G_X)$ has a linear resolution and then, using Theorem \ref{th.Fr}, $G_X^c$ is a chordal graph, that is, $X$ has the $Hyp_n(\star)$-property for any $n$. Remark \ref{n*} completes the proof.
\end{proof}

\section{A numerical characterization of the ACM property}\label{sec:num_char}

   Since we are interested in the study of the ACM property for varieties of lines $X$, from now on we assume that $U_h(X)$ resembles a Ferrers diagram for each $h=1,2,3.$
   In order to give a characterization of the ACM property we introduce the following notation.

    \begin{definition}\label{def:mi}
    Let $P=P_{ijk}=\mathcal{L}(A_i)\cap\mathcal{L}(B_j)\cap\mathcal{L}(C_k)$ be a point of a variety of lines $X$, we call \textit{multiplicity of $P$}  the number of lines of $X$ passing through the point $P$ and we denote it by $\mu_{ijk}.$
    \end{definition}

    \begin{remark}
    Since at most three lines of $X$ (one of each type) pass through the point $P$, $\mu_{ijk} \leq 3$.
    \end{remark}

 \begin{definition} Given a variety of lines $X$, we define  a 3-dimensional matrix  $M_X:=(\mu_{ijk})\in \mathbb{N}^{d_1\times d_2\times d_3}$ whose $(i,j,k)$-entry is the multiplicity of  $P_{i,j,k}.$  We call it the \textit{matrix of the multiplicities} of $X$.
 \end{definition}

We also define
   \begin{definition}
   $M_X^{({3})}:=(\mu_{ij0})\in \mathbb{N}^{d_1\times d_2}$, where
   $$\mu_{ij0}:=\begin{cases}
    1 & \ \text{if}\ (i,j) \in U_3(X),\ \text{i.e.,}\  \mathcal{L}(A_{i},B_{j})\in X\\
    0 & \ \text{otherwise}.
   \end{cases}$$

   \noindent Analogously, $M_X^{(2)}:=(\mu_{i0 k})\in \mathbb{N}^{d_1\times d_3}$, where $\mu_{i0k}:=\begin{cases}
    1 & \ \text{if}\ \mathcal{L}(A_{i},C_{k})\in X\\
    0 & \ \text{otherwise}
      \end{cases}$

   \noindent and $M_X^{({1})}:=(\mu_{0 jk})\in \mathbb{N}^{d_2\times d_3}$, where $\mu_{0jk}:=\begin{cases}
    1 & \ \text{if}\ \mathcal{L}(B_j, C_k)\in X\\
    0 & \ \text{otherwise}
      \end{cases}.$
    \end{definition}

\begin{example} \label{examplemultiplicity}
Let us consider $X \subseteq \mathbb{P}^1 \times \mathbb{P}^1 \times \mathbb{P}^1$ as Figure \ref{figure6starproperty}
\begin{displaymath}
\begin{split} X = & \mathcal{L}(A_1,B_1) \cup \mathcal{L}(A_1,B_2) \cup \mathcal{L}(A_2,B_2) \cup \mathcal{L}(A_1,C_1) \cup \mathcal{L}(A_2,C_1) \\ \cup & \mathcal{L}(A_2,C_2) \cup \mathcal{L}(B_1,C_1) \cup \mathcal{L}(B_1,C_2) \cup \mathcal{L}(B_2,C_2).
\end{split}
\end{displaymath}
We have
 $$ \mu_{111} = \mu_{222} = 3 \ \ , \ \  \mu_{121} = \mu_{221} = \mu_{211} = \mu_{112} = \mu_{122} = \mu_{212} = 2 $$
and  \begin{displaymath} M_X^{(3)} = \left( \begin{matrix} 1 & 1 \\ 0 & 1
 \end{matrix} \right), \ \  M_X^{(2)} = \left( \begin{matrix} 1 & 0 \\ 1 & 1
 \end{matrix} \right), \ \  M_X^{(1)} = \left( \begin{matrix} 1 & 1 \\ 0 & 1
 \end{matrix} \right).
 \end{displaymath}
  \begin{center}
   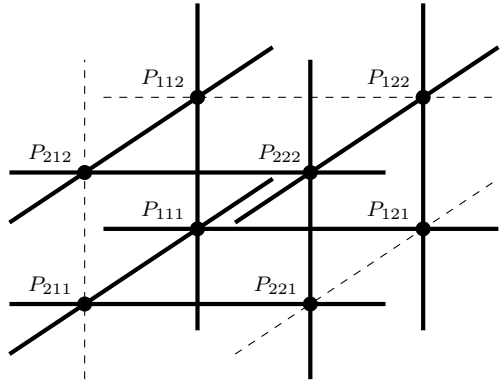
\begin{figure}[H]
   \centering
   \begin{tikzpicture}[scale=0.5]

   \draw[dashed] (2,0) - - (2,8.5);
   \draw [ultra thick] (5,1.3) - - (5,10);
   \draw [ultra thick](8,0) - - (8,8.5);
   \draw [ultra thick] (11,1.3) - - (11,10);

   \draw [ultra thick] (0,2) - - (10,2);
   \draw [ultra thick] (2.5,4) - - (13,4);
   \draw [ultra thick] (0,55/10) - - (10,55/10);
   \draw[dashed] (2.5,75/10) - - (13,75/10);

   \draw [ultra thick] (0,2/3) - - (7,16/3);
   \draw [ultra thick] (0,25/6) - - (7,53/6);
   \draw[dashed] (6,2/3) - - (13,16/3);
   \draw [ultra thick] (6,25/6) - - (13,53/6);

   \fill (2,2) circle (2mm);
   \fill (8,2) circle (2mm);
   \fill (8,5.5) circle (2mm);
   \fill (2,5.5) circle (2mm);
   \fill (5,4) circle (2mm);
   \fill (11,4) circle (2mm);
   \fill (11,7.5) circle (2mm);
   \fill (5,7.5) circle (2mm);

   \node [font=\footnotesize, text width=10mm] at (4.5,4.5) { $P_{111}$};
    \node [font=\footnotesize, text width=10mm] at (10.5,4.5) { $P_{121}$};
   \node [font=\footnotesize, text width=10mm] at (1.5,2.5) { $P_{211}$};
   \node [font=\footnotesize, text width=10mm] at (7.5,2.5) { $P_{221}$};
   \node [font=\footnotesize, text width=10mm] at (4.5,8) { $P_{112}$};
   \node [font=\footnotesize, text width=10mm] at (10.5,8) { $P_{122}$};
   \node [font=\footnotesize, text width=10mm] at (1.5,6) { $P_{212}$};
   \node [font=\footnotesize, text width=10mm] at (7.6,6) { $P_{222}$};

   \end{tikzpicture} \caption{The variety of lines $X$ (in bold).} \label{figure6starproperty}
   \end{figure}
   \end{center}

\end{example}

   Now we provide a criterion to establish if $X$ is ACM or not just looking at the matrices of the multiplicities $M_X$, $M_X^{(1)}$, $M_X^{(2)}$ and $M_X^{(3)}$.

   \begin{proposition} \label{6starproperty}
    Let $X$ be a variety of lines. $X$ has the $Hyp_6(\star)$-property iff for all $a_1,a_2\in[d_1],b_1,b_2\in[d_2],c_1,c_2\in[d_3]$
     $$\text{either }\  \left(\begin{array}{cc}
     \mu_{a_1b_1c_1} & \mu_{a_1b_2c_1}\\
     \mu_{a_2b_1c_1} & \mu_{a_2b_2c_1}\\
     \end{array}\right)\neq\left(\begin{array}{cc}
     3 & 2\\
     2 & 2\\
     \end{array}\right)\ \text{or} \ \left(\begin{array}{cc}
     \mu_{a_1b_1c_2} & \mu_{a_1b_2c_2}\\
     \mu_{a_2b_1c_2} & \mu_{a_2b_2c_2}\\
     \end{array}\right)\neq \left(\begin{array}{cc}
     2 & 2\\
     2 & 3\\
     \end{array}\right).$$
   \end{proposition}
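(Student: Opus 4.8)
The plan is to translate the $Hyp_6(\star)$-property, which is a statement about $6$-tuples of hyperplanes $H_1,\dots,H_6$, into the matrix condition by first reducing to the ``generic'' shape a violating $6$-tuple must have. First I would observe that in order to have $\mathcal{L}(H_i,H_j)\in X$ for all $j\neq i-1,i,i+1$ (indices mod $6$), no three of the $H_i$ can be of the same type $\mathbf{e}_\ell$: if $H_i,H_j,H_k$ all had the same degree, then at least two of them, say $H_i,H_j$, would be non-consecutive in the cyclic order, forcing $\mathcal{L}(H_i,H_j)\in X$, which is impossible since a line needs two forms of \emph{different} degree. Hence among $H_1,\dots,H_6$ exactly two are of each of the three types. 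Next I would argue that, up to the cyclic relabeling allowed in Definition \ref{def:chordal} (and the symmetry $H_i\leftrightarrow H_{n+2-i}$ that reverses the cycle), the only cyclic pattern of types with two of each type in which \emph{every} non-consecutive pair has different types is, say, $A,B,C,A,B,C$ around the hexagon — any pattern placing two equal types at non-consecutive positions is ruled out by the previous paragraph, and one checks the remaining patterns ($A,A,B,B,C,C$, $A,B,A,C,B,C$, etc.) all put two equal types at cyclic distance $2$ or $3$. So write $H_1=A_{a_1}, H_2=B_{b_1}, H_3=C_{c_1}, H_4=A_{a_2}, H_5=B_{b_2}, H_6=C_{c_2}$.

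With this normal form fixed, I would write out exactly which of the nine ``mixed-type'' pairs $\{H_i,H_j\}$ are forced to lie in $X$ and which one is the ``missing'' conclusion edge. The non-consecutive pairs (those with $j\neq i-1,i,i+1$) among the six positions, keeping only the ones of mixed type, are $\{H_1,H_3\},\{H_1,H_5\},\{H_2,H_4\},\{H_2,H_6\},\{H_3,H_5\},\{H_4,H_6\}$ — that is, the lines $\mathcal{L}(A_{a_1},C_{c_1})$, $\mathcal{L}(A_{a_1},B_{b_2})$, $\mathcal{L}(A_{a_2},B_{b_1})$, $\mathcal{L}(B_{b_1},C_{c_2})$, $\mathcal{L}(C_{c_1},B_{b_2})$, $\mathcal{L}(A_{a_2},C_{c_2})$ — all belonging to $X$; the $Hyp_6(\star)$-property then demands one of the six \emph{consecutive} mixed-type pairs $\mathcal{L}(A_{a_1},B_{b_1}),\mathcal{L}(B_{b_1},C_{c_1}),\mathcal{L}(C_{c_1},A_{a_2}),\mathcal{L}(A_{a_2},B_{b_2}),\mathcal{L}(B_{b_2},C_{c_2}),\mathcal{L}(C_{c_2},A_{a_1})$ to be in $X$. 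So a \emph{violation} of $Hyp_6(\star)$ at this $6$-tuple means: the six ``diagonal'' lines are all in $X$ but none of the six ``boundary'' lines is. I would then simply read off the multiplicities $\mu_{a_pb_qc_r}$ of the eight points $P_{a_pb_qc_r}$ ($p,q,r\in\{1,2\}$) from this information — each $\mu$ counts how many of the (at most three) relevant lines through that point lie in $X$ — and check that this is precisely encoded by the two $2\times 2$ slices being $\left(\begin{smallmatrix}3&2\\2&2\end{smallmatrix}\right)$ and $\left(\begin{smallmatrix}2&2\\2&3\end{smallmatrix}\right)$ (after possibly swapping the roles of the two elements in one or more of the pairs $\{a_1,a_2\},\{b_1,b_2\},\{c_1,c_2\}$ to normalize which corner carries the multiplicity-$3$ point). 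For instance $P_{a_1b_1c_1}$ lies on $\mathcal{L}(A_{a_1},B_{b_1})$, $\mathcal{L}(A_{a_1},C_{c_1})$, $\mathcal{L}(B_{b_1},C_{c_1})$; of these, only $\mathcal{L}(A_{a_1},C_{c_1})$ is among the forced ``diagonal'' lines... wait — here I must be careful and actually tabulate all eight points against all twelve potentially-present lines, which is the bookkeeping core of the proof.

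For the converse, I would run the same dictionary backwards: given $a_1,a_2,b_1,b_2,c_1,c_2$ for which both slices have the forbidden form, build the hexagon $A_{a_1},B_{b_1},C_{c_1},A_{a_2},B_{b_2},C_{c_2}$ and verify from the multiplicity values that all six diagonal lines are present and all six boundary lines are absent, exhibiting a failure of $Hyp_6(\star)$; conversely, any failure of $Hyp_6(\star)$ produces, after the normalization of the first two paragraphs, such a forbidden pair of slices. The main obstacle I anticipate is purely combinatorial rather than conceptual: correctly enumerating the cyclic type-patterns in the normalization step (making sure no admissible hexagon shape is overlooked) and then getting the $8\times 12$ point–line incidence bookkeeping exactly right, including tracking how the residual relabeling freedom in $\{a_1,a_2\}$, $\{b_1,b_2\}$, $\{c_1,c_2\}$ lets one pin down \emph{which} of the two slices carries the $3$ in the top-left versus bottom-right corner. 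Everything else — the passage from hexagons to matrices — is a direct unwinding of Definitions \ref{def:chordal} and \ref{def:mi}.
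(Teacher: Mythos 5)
Your normalization step is flipped, and this is a genuine error rather than a bookkeeping slip that later care could absorb. Your first paragraph correctly shows that two hyperplanes of the same type can never sit at non-consecutive positions of the hexagon: for such a pair the hypothesis of Definition \ref{def:chordal} would demand $\mathcal{L}(H_i,H_j)\in X$, which is impossible since a line requires two forms of different degrees (this is exactly how the paper's Remark \ref{n*} and the $Hyp_5$ example treat same-type hyperplanes -- the hypothesis simply fails and there is nothing to verify). Hence equal types must be cyclically \emph{adjacent}, so the only admissible pattern is $A,A,B,B,C,C$ up to rotation and permutation of types -- precisely the pattern you discard. The pattern $A,B,C,A,B,C$ that you adopt as the normal form places the two type-$\mathbf{e}_1$ hyperplanes at distance $3$, so by your own argument no hexagon of that shape can ever satisfy the hypothesis, let alone violate the property; your subsequent device of ``keeping only the mixed-type non-consecutive pairs'' silently weakens the definition and is what hides the contradiction.

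Because of this flip, the incidence data you extract is wrong. A genuine violation of $Hyp_6(\star)$ forces the nine non-consecutive (automatically mixed-type) lines $\mathcal{L}(A_{a_1},B_{b_1})$, $\mathcal{L}(A_{a_1},B_{b_2})$, $\mathcal{L}(A_{a_1},C_{c_1})$, $\mathcal{L}(A_{a_2},B_{b_2})$, $\mathcal{L}(A_{a_2},C_{c_1})$, $\mathcal{L}(A_{a_2},C_{c_2})$, $\mathcal{L}(B_{b_1},C_{c_1})$, $\mathcal{L}(B_{b_1},C_{c_2})$, $\mathcal{L}(B_{b_2},C_{c_2})$ to lie in $X$, with only the three consecutive mixed-type lines $\mathcal{L}(A_{a_2},B_{b_1})$, $\mathcal{L}(B_{b_2},C_{c_1})$, $\mathcal{L}(A_{a_1},C_{c_2})$ absent (nine in, three out); reading off the eight multiplicities from this is what produces the slices $\left(\begin{smallmatrix}3&2\\2&2\end{smallmatrix}\right)$ and $\left(\begin{smallmatrix}2&2\\2&3\end{smallmatrix}\right)$, and the converse is the same dictionary run backwards -- this is exactly the paper's (short, direct) proof. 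Your configuration of six ``diagonal'' lines present and six ``boundary'' lines absent is not a violation of the property as defined, and its multiplicities do not match the statement: for instance it gives $\mu_{a_1b_1c_1}=1$, not $3$. So the overall strategy (translate hexagons into the multiplicity matrices) is the right one and is the paper's, but the proof as proposed would not go through without redoing the normalization and the ensuing point--line tabulation.
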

   \begin{proof} If $X$ does not have the $Hyp_6(\star)$-property then there exist six planes, say $\mathcal{L}(A_1), \mathcal{L}(A_2), \mathcal{L}(B_1), \mathcal{L}(B_2), \mathcal{L}(C_1), \mathcal{L}(C_2)$, such that the lines $\mathcal{L}(A_1,B_1),$ $\mathcal{L}(A_1,B_2),$ $\mathcal{L}(A_1,C_1), $ $\mathcal{L}(A_2,B_2),$ $\mathcal{L}(A_2,C_1),$ $\mathcal{L}(A_2,C_2), $ $\mathcal{L}(B_1,C_1), $ $\mathcal{L}(B_1,C_2), $ $\mathcal{L}(B_2,C_2)$ belong to $X$    and $\mathcal{L}(A_2,B_1), $ $\mathcal{L}(B_2,C_1), $ $\mathcal{L}(A_1,C_2)\notin X.$ Then  we have that
        $$\left(\begin{array}{cc}
        \mu_{111} & \mu_{121}\\
        \mu_{211} & \mu_{221}\\
        \end{array}\right)=\left(\begin{array}{cc}
        3 & 2\\
        2 & 2\\
        \end{array}\right)\ \text{and} \ \left(\begin{array}{cc}
        \mu_{112} & \mu_{122}\\
        \mu_{212} & \mu_{222}\\
        \end{array}\right)= \left(\begin{array}{cc}
        2 & 2\\
        2 & 3\\
        \end{array}\right).$$
        On the other hand if    $\left(\begin{array}{cc}
        \mu_{111} & \mu_{121}\\
        \mu_{211} & \mu_{221}\\
        \end{array}\right)=\left(\begin{array}{cc}
        3 & 2\\
        2 & 2\\
        \end{array}\right)\ \text{and} \ \left(\begin{array}{cc}
        \mu_{112} & \mu_{122}\\
        \mu_{212} & \mu_{222}\\
        \end{array}\right)= \left(\begin{array}{cc}
        2 & 2\\
        2 & 3\\
        \end{array}\right)$ then it is easy to check that $X$ does not have the $Hyp_6(\star)$-property since the lines
        $\mathcal{L}(A_1,B_1),$ $\mathcal{L}(A_1,B_2),$ $\mathcal{L}(A_1,C_1), $ $\mathcal{L}(A_2,B_2),$ $\mathcal{L}(A_2,C_1),$ $\mathcal{L}(A_2,C_2), $ $\mathcal{L}(B_1,C_1), $ $\mathcal{L}(B_1,C_2), $ $\mathcal{L}(B_2,C_2)\in X $   and $\mathcal{L}(A_2,B_1), $ $\mathcal{L}(B_2,C_1), $ $\mathcal{L}(A_1,C_2)\notin X.$
     \end{proof}

     \begin{proposition} \label{5starproperty}
        Let $X$ be a variety of lines. $X$ has the $Hyp_5(\star)$-property iff for all $a_1,a_2\in[d_1],b_1,b_2\in[d_2],c_1,c_2\in[d_3]$ the following three conditions hold:
        $$1) \text{either}\left(\begin{array}{cc}
        \mu_{a_1b_1c_1} & \mu_{a_1b_2c_1}\\
        \mu_{a_2b_1c_1} & \mu_{a_2b_2c_1}\\
        \end{array}\right)\neq\left(\begin{array}{cc}
        2 & 1\\
        2 & 2\\
        \end{array}\right)\ \text{or} \ \left(\begin{array}{cc}
        \mu_{a_1b_10} & \mu_{a_1b_20}\\
        \mu_{a_2b_10} & \mu_{a_2b_20}\\
        \end{array}\right)\neq \left(\begin{array}{cc}
        1 & 1\\
        0 & 1\\
        \end{array}\right),$$

        $$2) \text{either}\left(\begin{array}{cc}
                \mu_{a_1b_1c_1} & \mu_{a_1b_1c_2}\\
                \mu_{a_2b_1c_1} & \mu_{a_2b_1c_2}\\
                \end{array}\right)\neq\left(\begin{array}{cc}
                2 & 1\\
                2 & 2\\
                \end{array}\right)\ \text{or} \ \left(\begin{array}{cc}
                \mu_{a_10 c_1} & \mu_{a_10 c_2}\\
                \mu_{a_20 c_1} & \mu_{a_20 c_2}\\
                \end{array}\right)\neq \left(\begin{array}{cc}
                1 & 1\\
                0 & 1\\
                \end{array}\right),$$

         $$3) \text{either}\left(\begin{array}{cc}
                                \mu_{a_1b_1c_1} & \mu_{a_1b_1c_2}\\
                                \mu_{a_1b_2c_1} & \mu_{a_1b_2c_2}\\
                                \end{array}\right)\neq\left(\begin{array}{cc}
                                2 & 1\\
                                2 & 2\\
                                \end{array}\right)\ \text{or} \ \left(\begin{array}{cc}
                                \mu_{0 b_1c_1} & \mu_{0 b_1c_2}\\
                                \mu_{0 b_2c_1} & \mu_{0 b_2c_2}\\
                                \end{array}\right)\neq \left(\begin{array}{cc}
                                1 & 1\\
                                0 & 1\\
                                \end{array}\right).$$

      \end{proposition}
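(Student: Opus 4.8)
The plan is to argue from Definition~\ref{def:chordal} that $X$ \emph{fails} the $Hyp_5(\star)$-property exactly when there are five hyperplanes $H_1,\dots,H_5$ (cyclically indexed, $H_0=H_5$, $H_6=H_1$) with $\mathcal{L}(H_i,H_j)\in X$ whenever $j\neq i-1,i,i+1$ but $\mathcal{L}(H_u,H_{u+1})\notin X$ for every $u$, and then to match such a ``bad $5$-tuple'' with the failure of one of conditions 1), 2), 3). First I would pin down its combinatorial type. The five pairs at cyclic distance $2$ must all be genuine lines, so the two hyperplanes in each of them have different types; since these five pairs form a $5$-cycle on $\{H_1,\dots,H_5\}$ (the pentagram $H_1H_3H_5H_2H_4$), the type assignment $H_i\mapsto(\text{type of }H_i)$ is a proper $3$-colouring of a $5$-cycle. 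If some type were used three times, two such vertices would be adjacent in the pentagram (any three vertices of a $5$-cycle contain an adjacent pair), contradicting properness; hence each type is used at most twice, so the distribution is $(2,2,1)$. This leaves three cases, according to whether the exceptional hyperplane has type $\mathbf e_1$, $\mathbf e_2$ or $\mathbf e_3$, which I claim correspond to conditions 3), 2), 1) respectively.

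Consider the case of an exceptional hyperplane of type $\mathbf e_3$, say $C_{c_1}$. Deleting its vertex from the pentagram leaves a path on four vertices that must be coloured by $\mathbf e_1,\mathbf e_2$ alternately (only two of each colour remain), so up to rotation of the pentagram and the harmless swap $\mathbf e_1\leftrightarrow\mathbf e_2$ the cyclic list of hyperplanes, in the \emph{original} indexing, is $(C_{c_1},A_{a_1},A_{a_2},B_{b_1},B_{b_2})$. Reading off the distance-$2$ and distance-$1$ pairs, a bad $5$-tuple of this type amounts exactly to
$$\mathcal{L}(A_{a_1},B_{b_1}),\ \mathcal{L}(A_{a_1},B_{b_2}),\ \mathcal{L}(A_{a_2},B_{b_2}),\ \mathcal{L}(A_{a_2},C_{c_1}),\ \mathcal{L}(B_{b_1},C_{c_1})\in X$$
together with $\mathcal{L}(A_{a_2},B_{b_1}),\ \mathcal{L}(A_{a_1},C_{c_1}),\ \mathcal{L}(B_{b_2},C_{c_1})\notin X$ (the remaining two distance-$1$ pairs $\{A_{a_1},A_{a_2}\}$ and $\{B_{b_1},B_{b_2}\}$ are not lines at all, so they impose nothing).

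Next I would translate this into the multiplicity matrices, using that $\mu_{ij0}=1$ iff $\mathcal{L}(A_i,B_j)\in X$ (and likewise $\mu_{i0k}$, $\mu_{0jk}$), while $\mu_{ijk}$ equals the number of the at most three lines $\mathcal{L}(A_i,B_j),\mathcal{L}(A_i,C_k),\mathcal{L}(B_j,C_k)$ lying in $X$. Substituting the data above gives
$$\left(\begin{array}{cc}\mu_{a_1b_10}&\mu_{a_1b_20}\\ \mu_{a_2b_10}&\mu_{a_2b_20}\end{array}\right)=\left(\begin{array}{cc}1&1\\ 0&1\end{array}\right)\qquad\text{and}\qquad \left(\begin{array}{cc}\mu_{a_1b_1c_1}&\mu_{a_1b_2c_1}\\ \mu_{a_2b_1c_1}&\mu_{a_2b_2c_1}\end{array}\right)=\left(\begin{array}{cc}2&1\\ 2&2\end{array}\right),$$
which is the negation of condition 1). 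Conversely these two equalities recover all eight membership facts: $\mu_{a_1b_2c_1}=1$ together with $\mathcal{L}(A_{a_1},B_{b_2})\in X$ forces $\mathcal{L}(A_{a_1},C_{c_1}),\mathcal{L}(B_{b_2},C_{c_1})\notin X$; then $\mu_{a_1b_1c_1}=2$ forces $\mathcal{L}(B_{b_1},C_{c_1})\in X$; then $\mu_{a_2b_1c_1}=2$ forces $\mathcal{L}(A_{a_2},C_{c_1})\in X$; and $\mu_{a_2b_2c_1}=2$ is then automatic. Since $a_1,a_2\in[d_1]$, $b_1,b_2\in[d_2]$, $c_1\in[d_3]$ are arbitrary, fixing the positions of the $0$ in $M_X^{(3)}$ and of the pattern in the $c_1$-slice is no loss of generality. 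Running the same argument with the exceptional hyperplane of type $\mathbf e_2$ (resp.\ $\mathbf e_1$) produces the negation of condition 2) (resp.\ 3)), with $M_X^{(2)}$ (resp.\ $M_X^{(1)}$) replacing $M_X^{(3)}$. Hence $X$ fails $Hyp_5(\star)$ iff one of 1), 2), 3) fails, as claimed.

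The step I expect to demand the most care is this last translation: one must verify that the two $2\times 2$ matrix identities are genuinely \emph{equivalent} to the eight membership statements, not merely implied by them, so that no spurious configuration can satisfy the matrix conditions without coming from a bad $5$-tuple. Keeping straight which distance-$1$ non-lines are forced by repeated types versus genuinely absent is where a miscount would most easily creep in.
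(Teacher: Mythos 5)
Your argument is correct and follows essentially the same route as the paper: reduce failure of the $Hyp_5(\star)$-property to an explicit nine-line membership pattern for five hyperplanes, translate it into the two $2\times 2$ multiplicity matrices, and conversely recover the pattern (hence a bad $5$-tuple) from the matrix equalities. The only difference is one of exposition: you justify the paper's ``without loss of generality'' by the proper $3$-colouring of the pentagram (forcing the $(2,2,1)$ type distribution and the cyclic form $(C,A,A,B,B)$ up to relabelling) and treat the three symmetric cases matching conditions 1)--3), which the paper leaves implicit.
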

      \begin{proof}





\noindent If $X$ does not have the $Hyp_5(\star)$-property, we say, without loss of generality, that exist five planes $\mathcal{L}(A_1), \mathcal{L}(A_2), \mathcal{L}(B_1), \mathcal{L}(B_2), \mathcal{L}(C_1)$ such that, among all, only the lines $\mathcal L(A_2,B_1)$, \ $\mathcal L(A_1,C_1)$, \ $\mathcal L(B_2,C_1) \notin X.$
               Then we have

               $\left(\begin{array}{cc}
               \mu_{111} & \mu_{121}\\
               \mu_{211} & \mu_{221}\\
               \end{array}\right)=\left(\begin{array}{cc}
               2 & 1\\
               2 & 2\\
               \end{array}\right)\ \ \text{and}  \ \left(\begin{array}{cc}
               \mu_{110} & \mu_{120}\\
               \mu_{210} & \mu_{220}\\
               \end{array}\right)= \left(\begin{array}{cc}
               1 & 1\\
               0 & 1\\
               \end{array}\right).$

\noindent On the other hand,  assume, for instance, we have the following equalities

                $\left(\begin{array}{cc}
                \mu_{111} & \mu_{121}\\
                \mu_{211} & \mu_{221}\\
                \end{array}\right)=\left(\begin{array}{cc}
                2 & 1\\
                2 & 2\\
                \end{array}\right)\ \ \text{and}  \ \left(\begin{array}{cc}
                \mu_{110} & \mu_{120}\\
                \mu_{210} & \mu_{220}\\
                \end{array}\right)= \left(\begin{array}{cc}
                1 & 1\\
                0 & 1\\
                \end{array}\right).$

                 \noindent From the previous equalities, we get $\mathcal{L}(A_1,C_1)\notin X$ thus, since $\mu_{111}=2,$ we have $\mathcal{L}(A_1,B_1),\mathcal{L}(B_1,C_1)\in X.$ Analogously $\mathcal{L}(A_1,B_2)\in X$ and so, since $\mu_{121}=1,$ we have $\mathcal{L}(B_2,C_1)\notin X.$
                 Moreover $\mathcal{L}(A_2,B_2)\in X$ and so, since $\mu_{221} = 2$, we have $\mathcal L(A_2,C_1) \in X$. Finally $\mathcal L(A_2,B_1) \notin X$ since $\mu_{210}=0$.  So $X$ does not have the $Hyp_5(\star)$-property.
         \end{proof}

      \begin{proposition} \label{4starproperty}
        Let $X$ be a variety of lines. $X$ has the $Hyp_4(\star)$-property iff for all  $a_1,a_2\in[d_1],b_1,b_2\in[d_2],c_1,c_2\in[d_3]$ the following three conditions hold:
        $$1) \text{either} \ \left(\begin{array}{c}
        \mu_{a_1b_1c_1} \\
        \mu_{a_2b_1c_1} \\
        \end{array}\right)\neq\left(\begin{array}{cc}
        1 \\
        1 \\
        \end{array}\right)\ \text{or} \ \left(\begin{array}{cc}
        \mu_{a_1b_10}\\
        \mu_{a_2b_10}\\
        \end{array}\right)\neq \left(\begin{array}{cc}
        1 \\
        0 \\
        \end{array}\right).$$

        $$2) \text{either} \ \left(\begin{array}{c}
                \mu_{a_1b_1c_1} \\
                \mu_{a_1b_1c_2} \\
                \end{array}\right)\neq\left(\begin{array}{cc}
                1 \\
                1 \\
                \end{array}\right)\ \text{or} \ \left(\begin{array}{cc}
                \mu_{a_10c_1}\\
                \mu_{a_10c_2}\\
                \end{array}\right)\neq \left(\begin{array}{cc}
                1 \\
                0 \\
                \end{array}\right).$$

        $$3) \text{either} \ \left(\begin{array}{c}
                \mu_{a_1b_1c_1} \\
                \mu_{a_1b_2c_1} \\
                \end{array}\right)\neq\left(\begin{array}{cc}
                1 \\
                1 \\
                \end{array}\right)\ \text{or} \ \left(\begin{array}{cc}
                \mu_{0b_1c_1}\\
                \mu_{0b_2c_1}\\
                \end{array}\right)\neq \left(\begin{array}{cc}
                1 \\
                0 \\
                \end{array}\right).$$
      \end{proposition}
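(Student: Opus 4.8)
The plan is to mirror the proof strategy already used for Propositions \ref{6starproperty} and \ref{5starproperty}, but now for the smallest nontrivial case $n=4$, which by the remark is exactly the $(\star)$-property. First I would unwind Definition \ref{def:chordal} for $n=4$: a failure of the $Hyp_4(\star)$-property means there are four hyperplanes $H_1,H_2,H_3,H_4$ with $\mathcal{L}(H_i,H_j)\in X$ for every pair $j\neq i-1,i,i+1$ (indices cyclic), yet $\mathcal{L}(H_u,H_{u+1})\notin X$ for all $u$. Since with $H_0=H_4$ and $H_5=H_1$ the excluded pairs for index $i$ are the ``neighbors'' $\{i-1,i+1\}$ and $i$ itself, the only pairs that the hypothesis forces into $X$ are the two ``diagonal'' pairs $\{H_1,H_3\}$ and $\{H_2,H_4\}$, while the four ``consecutive'' pairs $\{H_1,H_2\},\{H_2,H_3\},\{H_3,H_4\},\{H_4,H_1\}$ are all excluded. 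For this configuration to consist of honest lines (each needing two hyperplanes of different type), the diagonal pairs $\{H_1,H_3\}$ and $\{H_2,H_4\}$ must each be of mixed type; a short type-count then shows that, up to the symmetry permuting $\mathbb{P}^1\times\mathbb{P}^1\times\mathbb{P}^1$, one may take $H_1=\mathcal{L}(A_{a_1})$, $H_3=\mathcal{L}(B_{b_1})$, $H_2=\mathcal{L}(A_{a_2})$, $H_4=\mathcal{L}(B_{b_2})$ with $H_1,H_2$ of type $\mathbf{e}_1$ and $H_3,H_4$ of type $\mathbf{e}_2$ — or one of the two analogous cases obtained by swapping which coordinate plays the role of $\mathbf{e}_3$.

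Next I would translate this combinatorial picture into the multiplicity matrices. In the case just described, ``$\mathcal{L}(A_{a_1},B_{b_1})\in X$'' is exactly the statement $\mu_{a_1b_10}=1$ in $M_X^{(3)}$, and the remaining forced/forbidden incidences among $H_1,\dots,H_4$ are: $\mathcal{L}(A_{a_2},B_{b_2})\in X$, $\mathcal{L}(A_{a_1},B_{b_2})\notin X$, $\mathcal{L}(A_{a_2},B_{b_1})\notin X$. Wait — but that would only involve $M_X^{(3)}$, whereas the statement of Proposition \ref{4starproperty} pairs a column vector of $\mu_{\bullet b_1 c_1}$'s against a column vector of $\mu_{\bullet b_1 0}$'s. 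The resolution is that a violation of $Hyp_4(\star)$ in which, say, $H_1=\mathcal{L}(A_{a_1}),H_2=\mathcal{L}(A_{a_2})$ and $H_3=\mathcal{L}(B_{b_1}),H_4=\mathcal{L}(C_{c_1})$ is \emph{also} allowed: here the diagonals are $\{A_{a_1},B_{b_1}\}$ and $\{A_{a_2},C_{c_1}\}$ (both mixed), and the consecutive pairs forbidden are $\{A_{a_1},A_{a_2}\}$ (vacuous — same type, no line), $\{A_{a_2},B_{b_1}\}$, $\{B_{b_1},C_{c_1}\}$, $\{C_{c_1},A_{a_1}\}$. Since one consecutive pair is automatically not a line, the ``only the diagonals'' requirement becomes: $\mathcal{L}(A_{a_1},B_{b_1})\in X$, $\mathcal{L}(A_{a_2},C_{c_1})\in X$, and $\mathcal{L}(A_{a_2},B_{b_1})\notin X$, $\mathcal{L}(B_{b_1},C_{c_1})\notin X$, $\mathcal{L}(A_{a_1},C_{c_1})\notin X$. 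Reading multiplicities: $\mu_{a_1b_1c_1}=|\{\mathcal{L}(A_{a_1},B_{b_1}),\mathcal{L}(A_{a_1},C_{c_1}),\mathcal{L}(B_{b_1},C_{c_1})\}\cap X|=1$ and $\mu_{a_2b_1c_1}=|\{\mathcal{L}(A_{a_2},B_{b_1}),\mathcal{L}(A_{a_2},C_{c_1}),\mathcal{L}(B_{b_1},C_{c_1})\}\cap X|=1$, while $\mu_{a_1b_10}=1$ (because $\mathcal{L}(A_{a_1},B_{b_1})\in X$) and $\mu_{a_2b_10}=0$ (because $\mathcal{L}(A_{a_2},B_{b_1})\notin X$). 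That is precisely the forbidden pattern in condition~1), and the other two conditions come from the two analogous choices of which coordinate supplies the ``extra'' plane. So both directions are to be checked: (a) a $Hyp_4(\star)$-violation produces one of the three forbidden $2\times 2$-into-column patterns; (b) conversely, each forbidden pattern reconstructs four hyperplanes witnessing the violation.

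For direction (a) I would argue: given a violation with hyperplanes $H_1,\dots,H_4$, first rule out the degenerate sub-cases. If all four $H_i$ had the same type there are no lines at all, contradicting that the diagonals lie in $X$; if exactly two or three share a type, a small case analysis on which of $H_1H_3$, $H_2H_4$ can be mixed forces, up to relabeling and up to the $S_3$-symmetry of the three $\mathbb{P}^1$ factors, exactly the configuration $\{A_{a_1},A_{a_2},B_{b_1},C_{c_1}\}$ above (one plane of one type, one of another, two of the third), which lands in condition~1), 2) or 3) depending on which coordinate is doubled. The translation of the incidence data to multiplicities is the routine computation sketched above, using only Definition \ref{def:mi} and the definitions of $M_X^{(h)}$. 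For direction (b), starting from the forbidden pattern in, say, condition~1), I would set $H_1=\mathcal{L}(A_{a_1})$, $H_2=\mathcal{L}(A_{a_2})$, $H_3=\mathcal{L}(B_{b_1})$, $H_4=\mathcal{L}(C_{c_1})$ and read off from $\mu_{a_1b_1c_1}=1,\ \mu_{a_2b_1c_1}=1,\ \mu_{a_1b_10}=1,\ \mu_{a_2b_10}=0$ that $\mathcal{L}(A_{a_1},B_{b_1})\in X$, $\mathcal{L}(A_{a_2},B_{b_1})\notin X$; then $\mu_{a_2b_1c_1}=1$ with the line through $B_{b_1},A_{a_2}$ absent forces $\mathcal{L}(A_{a_2},C_{c_1})\in X$ and $\mathcal{L}(B_{b_1},C_{c_1})\notin X$; and $\mu_{a_1b_1c_1}=1$ with $\mathcal{L}(A_{a_1},B_{b_1})\in X$ already accounting for the count forces $\mathcal{L}(A_{a_1},C_{c_1})\notin X$. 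So the only lines among $H_1,\dots,H_4$ are the two diagonals and no consecutive pair is a line — a genuine $Hyp_4(\star)$-violation. The main obstacle, as with the two previous propositions, is not any one computation but the bookkeeping: one must be careful that the three conditions in the statement really exhaust the violating configurations up to the symmetry of the three factors, i.e. that no choice of types for $H_1,\dots,H_4$ slips through uncovered, and that each multiplicity value is computed with the right set of (at most three) candidate lines through the relevant crossing point. I would organize the write-up exactly as in Proposition \ref{5starproperty}: one paragraph deducing a forbidden pattern from a violation, one paragraph reconstructing a violation from a forbidden pattern, and a remark that conditions 2) and 3) follow from 1) by permuting the roles of $A$, $B$, $C$.
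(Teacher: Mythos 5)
Your reverse direction (reconstructing a $Hyp_4(\star)$-violation from a forbidden pattern) and your translation of the configuration $\{\mathcal{L}(A_{a_1}),\mathcal{L}(A_{a_2}),\mathcal{L}(B_{b_1}),\mathcal{L}(C_{c_1})\}$ into the multiplicity conditions agree with the paper's argument. The genuine gap is in the forward direction, at the step where you claim that a type-count ``forces, up to relabeling and the $S_3$-symmetry, exactly the configuration $\{A_{a_1},A_{a_2},B_{b_1},C_{c_1}\}$'' whenever two or three of the hyperplanes share a type. That is false for the very configuration you wrote down first: two hyperplanes of type $\mathbf{e}_1$ and two of type $\mathbf{e}_2$, ordered so that both diagonals are mixed, i.e. $\mathcal{L}(A_{a_1},B_{b_1}),\mathcal{L}(A_{a_2},B_{b_2})\in X$ while $\mathcal{L}(A_{a_1},B_{b_2}),\mathcal{L}(A_{a_2},B_{b_1})\notin X$. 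This two-plus-two shape is a legitimate violation of the $Hyp_4(\star)$-property that involves no third direction, so it is not detected by any of conditions 1)--3); your proposal notices it (``Wait''), pivots to the $2{+}1{+}1$ case, and then silently drops it. Without an extra hypothesis the equivalence as you argue it even fails: for $X=\mathcal{L}(A_1,B_1)\cup\mathcal{L}(A_2,B_2)$ one has $d_3=0$, so conditions 1)--3) hold vacuously, yet the four hyperplanes $\mathcal{L}(A_1),\mathcal{L}(A_2),\mathcal{L}(B_1),\mathcal{L}(B_2)$ (with the two $A$'s adjacent in the cyclic order) violate $Hyp_4(\star)$.

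The paper disposes of exactly this case by the standing assumption made at the beginning of Section \ref{sec:num_char}: from that point on every $U_h(X)$ is assumed to resemble a Ferrers diagram, and the proof of Proposition \ref{4starproperty} opens by invoking it (``since we are assuming there do not exist four planes $\mathcal{L}(A_1),\mathcal{L}(A_2),\mathcal{L}(B_1),\mathcal{L}(B_2)$ such that $\mathcal{L}(A_1,B_1),\mathcal{L}(A_2,B_2)\in X$ and $\mathcal{L}(A_1,B_2)$ or $\mathcal{L}(A_2,B_1)\notin X$''). Indeed, the Ferrers property of $U_3(X)$ (and symmetrically of $U_1(X)$, $U_2(X)$) forces one of the two mixed consecutive pairs to be a line of $X$, so two-plus-two violations cannot occur, and the only surviving violations are the $2{+}1{+}1$ ones you analyze. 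To repair your write-up you must either quote this standing hypothesis and use it to eliminate the two-plus-two case before your type-count, or build that hypothesis explicitly into the statement; with that fix, the remaining bookkeeping (the $2{+}1{+}1$ case, the multiplicity computations through each crossing point, and direction (b)) is essentially the paper's proof.
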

      \begin{proof}
      Suppose that $X$ does not have the $Hyp_4(\star)$-property.
       Since we are assuming do not exist four planes $\mathcal{L}(A_1), \mathcal{L}(A_2), \mathcal{L}(B_1), \mathcal{L}(B_2)$ such that $\mathcal L(A_1,B_1), \mathcal L(A_2,B_2) \in X$ and $\mathcal L(A_1, B_2)$ or $\mathcal L(A_2,B_1) \notin X$, then,  $X$ fails the $Hyp_4(\star)$-property if, without loss of generality, there exist four planes $\mathcal{L}(A_1), \mathcal{L}(A_2), \mathcal{L}(B_1), \mathcal{L}(C_1)$ such that, among all, only the lines $\mathcal L(A_2,B_1),\mathcal L(B_1,C_1), \mathcal L(A_1,C_1)\notin X.$
            Then we have

           $\left(\begin{array}{cc}
            \mu_{111} \\
            \mu_{211} \\
            \end{array}\right)=\left(\begin{array}{cc}
            1 \\
            1 \\
            \end{array}\right)\ \ \text{and}   \left(\begin{array}{cc}
            \mu_{110} \\
            \mu_{210} \\
            \end{array}\right)= \left(\begin{array}{cc}
            1 \\
            0 \\
            \end{array}\right).$

\noindent On the other hand,  assume, for instance, we have the following equalities

                $\left(\begin{array}{cc}
                \mu_{111} \\
                \mu_{211} \\
                \end{array}\right)=\left(\begin{array}{cc}
                1 \\
                1 \\
                \end{array}\right)\ \ \text{and}  \ \left(\begin{array}{cc}
                \mu_{110} \\
                \mu_{210} \\
                \end{array}\right)= \left(\begin{array}{cc}
                1 \\
                0 \\
                \end{array}\right).$

                 \noindent From the previous equalities, we get $\mathcal{L}(A_1,B_1)\in X$ thus, since $\mu_{111}=1,$ we have $\mathcal{L}(A_1,C_1),\mathcal{L}(B_1,C_1)\notin X.$ Analogously $\mathcal{L}(A_2,B_1)\notin X$ and so, since $\mu_{211}=1,$ we have $\mathcal{L}(A_2,C_1)\in X$. So $X$ does not have the $Hyp_4(\star)$-property.
     \end{proof}
\begin{example}
Let $X$ be as in Example \ref{examplemultiplicity} (see Figure \ref{figure6starproperty}). We observe that

$\left(\begin{array}{cc}
     \mu_{111} & \mu_{121}\\
     \mu_{211} & \mu_{221}\\
     \end{array}\right)= \left(\begin{array}{cc}
     3 & 2\\
     2 & 2\\
     \end{array}\right)  \text{and} \ \left(\begin{array}{cc}
            \mu_{112} & \mu_{122}\\
            \mu_{212} & \mu_{222}\\
            \end{array}\right)= \left(\begin{array}{cc}
            2 & 2\\
            2 & 3\\
            \end{array}\right)$

\noindent and then, by Proposition \ref{6starproperty}, we have that $X$ does not have the $Hyp_6(\star)$-property and so, by Theorem \ref{thm:ch1}, $X$ is not ACM.
\end{example}

\begin{example}

\noindent Let us consider the variety $W=X \cup \mathcal{L}(A_2,B_1)$, where $X$ is as Example \ref{examplemultiplicity}.

 \begin{center}
   \begin{figure}[H]
   \centering
   \begin{tikzpicture}[scale=0.48]





   \draw [ultra thick] (2,0) - - (2,8.5);
   \draw [ultra thick] (5,1.3) - - (5,10);
   \draw [ultra thick](8,0) - - (8,8.5);
   \draw [ultra thick] (11,1.3) - - (11,10);

   \draw [ultra thick] (0,2) - - (10,2);
   \draw [ultra thick] (2.5,4) - - (13,4);
   \draw [ultra thick] (0,55/10) - - (10,55/10);
   \draw[dashed](2.5,75/10) - - (13,75/10);

   \draw [ultra thick] (0,2/3) - - (7,16/3);
   \draw [ultra thick] (0,25/6) - - (7,53/6);
   \draw[dashed] (6,2/3) - - (13,16/3);
   \draw [ultra thick] (6,25/6) - - (13,53/6);

   \fill (2,2) circle (2mm);
   \fill (8,2) circle (2mm);
   \fill (8,5.5) circle (2mm);
   \fill (2,5.5) circle (2mm);
   \fill (5,4) circle (2mm);
   \fill (11,4) circle (2mm);
   \fill (11,7.5) circle (2mm);
   \fill (5,7.5) circle (2mm);

   \node [font=\footnotesize, text width=10mm] at (4.5,4.5) { $P_{111}$};
    \node [font=\footnotesize, text width=10mm] at (10.5,4.5) { $P_{121}$};
   \node [font=\footnotesize, text width=10mm] at (1.5,2.5) { $P_{211}$};
   \node [font=\footnotesize, text width=10mm] at (7.5,2.5) { $P_{221}$};
   \node [font=\footnotesize, text width=10mm] at (4.5,8) { $P_{112}$};
   \node [font=\footnotesize, text width=10mm] at (10.5,8) { $P_{122}$};
   \node [font=\footnotesize, text width=10mm] at (1.5,6) { $P_{212}$};
   \node [font=\footnotesize, text width=10mm] at (7.6,6) { $P_{222}$};


   \end{tikzpicture}\caption{The variety of lines $W$ (in bold).} \label{figure6starproperty2}
   \end{figure}
   \end{center}
We have $ \mu_{111} = \mu_{222} = \mu_{211} = \mu_{212} =3, \ \mu_{121} = \mu_{221} = \mu_{112} = \mu_{122} = 2. $
And for all  $ a_1,a_2\in[2],\ b_1,b_2\in[2],\ c_1,c_2 \in[2]$, we have:
$$\left(\begin{array}{cc}
     \mu_{a_1b_1c_1} & \mu_{a_1b_2c_1}\\
     \mu_{a_2b_1c_1} & \mu_{a_2b_2c_1}\\
     \end{array}\right)\neq  \left(\begin{array}{cc}
     3 & 2\\
     2 & 2\\
     \end{array}\right),
\left(\begin{array}{cc}
    \mu_{a_1b_1c_1} & \mu_{a_1b_2c_1}\\
    \mu_{a_2b_1c_1} & \mu_{a_2b_2c_1}\\
    \end{array}\right)\neq  \left(\begin{array}{cc}
    2 & 1\\
    2 & 2\\
    \end{array}\right), $$

$$    \left(\begin{array}{cc}
                    \mu_{a_1b_1c_1} & \mu_{a_1b_1c_2}\\
                    \mu_{a_2b_1c_1} & \mu_{a_2b_1c_2}\\
                    \end{array}\right)\neq  \left(\begin{array}{cc}
                    2 & 1\\
                    2 & 2\\
                    \end{array}\right), \left(\begin{array}{cc}
                        \mu_{a_1b_1c_1} & \mu_{a_1b_1c_2}\\
                        \mu_{a_1b_2c_1} & \mu_{a_1b_2c_2}\\
                        \end{array}\right)\neq  \left(\begin{array}{cc} 2 & 1\\
                          2 & 2\\
                        \end{array}\right), $$

$$ \left(\begin{array}{c}
        \mu_{a_1b_1c_1} \\
        \mu_{a_2b_1c_1} \\
        \end{array}\right)\neq   \left(\begin{array}{cc}
        1 \\
        1 \\
        \end{array}\right), \left(\begin{array}{c}
                \mu_{a_1b_1c_1} \\
                \mu_{a_1b_1c_2} \\
                \end{array}\right)\neq  \left(\begin{array}{cc}
                1 \\
                1 \\
                \end{array}\right), \left(\begin{array}{c}
                        \mu_{a_1b_1c_1} \\
                        \mu_{a_1b_2c_1} \\
                        \end{array}\right)\neq  \left(\begin{array}{cc}
                        1 \\
                        1 \\
                        \end{array}\right) $$
 and then, by Propositions \ref{6starproperty}, \ref{5starproperty} and \ref{4starproperty}, the variety of lines $W$ has the $Hyp_n(\star)$-property for $n=4, 5, 6$ and then, by Theorem \ref{thm:ch1}, $W$ is ACM.
\end{example}

\section{The Hilbert function of ACM codimension two varieties in $\mathbb P^1 \times \mathbb P^1 \times \mathbb P^1$}\label{sec:HF}
In this section we approach the study of the Hilbert function of these varieties. We start from the following specific case.

\begin{definition}
If $X$ is a variety of lines such that the index sets $U_1(X), U_2(X)$ and $ U_3(X)$ are Ferrers diagram, then we call $X$ a \textit{Ferrers variety of lines}.
That is, after renaming, we assume that if $\mathcal L(A_{i},B_{j})\in U_h(X)$  then  $\mathcal L(A_{i'},B_{j'})\in U_h(X)$ for every $1 \le i'\le i,\ 1 \le j'\le j$ and for each direction $h=1,2,3$.
\end{definition}

\begin{remark}
As a consequence of  Theorem \ref{thm:ch1}, note that a Ferrers variety of lines is ACM.
\end{remark}

 Recall that given a homogeneous ideal $I\subseteq R$, the \textit{Hilbert function of $R/I$} is the numerical function $$H_{R/I} : \mathbb{N}^3 \rightarrow \mathbb{N}$$ defined by $$ H_{R/I}(i,j,k) := dim_K (R/I)_{i,j,k} = dim_K R_{i,j,k} - dim_K I_{i,j,k}.$$
The \textit{first difference function of $H$}, denoted $\Delta H$, is the function $\Delta H : \mathbb{N}^3 \rightarrow \mathbb{N}$ defined by
$$ \Delta H(i,j,k) := \sum \limits_{(0,0,0) \le (l,m,n) \le (1,1,1)} (-1)^{l+m+n} H(i-l,j-m,k-n).$$

Now, let $X$ be a Ferrers variety of lines and let $X_3=\bigcup\limits_{(r,s)\in U_3(X)}\mathcal{L}(A_r,B_s)$ be the variety of lines consisting of the lines of $X$ of type $(1,1,0)$. Since $U_3(X)$ is a Ferrers diagram, the variety $X_3$ is ACM (in $\mathbb P^1\times \mathbb P^1$) and we can explicitly write out a set of minimal generators of $I_{X_3}$, see Remark \ref{r.cono} and \cite{GV-book}.  If $ \{ (a_{3,i}, b_{3,i})\} $ is the set of the degrees of these minimal generators, we denote by $D_3(X) := \{ (a_{3,i}, b_{3,i},0)\}$.

Analogously, if we consider the varieties of lines $X_1$ and $X_2$ consisting of the lines of $X$ of types $(0,1,1)$ and $(1,0,1)$, respectively, we obtain the sets of degrees $D_1(X) = \{ (0,b_{1,j}, c_{1,j})\} $ and $D_2(X) = \{ (a_{2,k}, 0, c_{2,k})\} $. Then we denote by

\begin{equation}
\begin{split}
D(X):=  \{ & (\max\{a_{3,i}, a_{2,k}\}, \max\{b_{3,i}, b_{1,j}\}, \max\{c_{1,j}, c_{2,k}\}  )\ |\ \forall \ (a_{3,i},b_{3,i},0)\in D_3(X), \\ \phantom{=} & (a_{2,k},0,c_{2,k})\in D_2(X), (0,b_{1,j},c_{1,j})\in D_1(X) \}. \nonumber
\end{split}
\end{equation}

\noindent Finally, we denote by $\hat{D}(X)$ the set of the minimal elements of $D(X)$ with respect to the natural partial order $\preceq$ on the elements of $\mathbb{N}^3$.

\begin{theorem}\label{beta_0}
    Let $X$ be a Ferrers variety of lines.  Then $I_X$ is minimally generated by the following set of forms
    \[\left\{ \prod_{i\le a}A_i\prod_{j\le b}B_j\prod_{k\le c}C_k  \ |\ \text{for each}\ (a,b,c)\in \hat{D}(X)   \right \}.
    \]

\end{theorem}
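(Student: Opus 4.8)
The plan is to combine the structural result of Corollary~\ref{c.monomial} with the monomial ideal picture from Lemma~\ref{monomial}. First I would observe that, since a Ferrers variety of lines $X$ is ACM, by Corollary~\ref{c.monomial} the ideal $I_X$ is generated by products of the linear forms $A_i,B_j,C_k$, and by the proof of Lemma~\ref{monomial} this generating set is obtained from a minimal generating set of the monomial ideal $J_X\subseteq S$ by the substitution $a_i\mapsto A_i$, $b_j\mapsto B_j$, $c_k\mapsto C_k$; this substitution is a regular sequence mod-out, so it preserves minimality of generators and all graded Betti numbers. Hence the whole problem reduces to computing the minimal monomial generators of $J_X=J(G_X)$, the cover ideal of the graph $G_X$. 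I would then recall that $J_X=\bigcap_{(i,j)\in U_3(X)}(a_i,b_j)\cap\bigcap_{(i,k)\in U_2(X)}(a_i,c_k)\cap\bigcap_{(j,k)\in U_1(X)}(b_j,c_k)$, so a monomial $a^{\alpha}b^{\beta}c^{\gamma}$ lies in $J_X$ iff it lies in each $(a_i,b_j)$, $(a_i,c_k)$, $(b_j,c_k)$ appearing, i.e. iff the set of indices $\{i:\alpha_i>0\}$, together with the analogous $b$- and $c$-index sets, forms a vertex cover of $G_X$. Passing to squarefree monomials (which suffices for minimal generators since $J_X$ is squarefree/radical), minimal generators correspond exactly to minimal vertex covers of $G_X$.

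Next I would exploit the Ferrers structure. Because each $U_h(X)$ is a Ferrers diagram, the bipartite graph between the $a$'s and $b$'s (and likewise the $a$--$c$ and $b$--$c$ bipartite graphs) is a \emph{Ferrers bipartite graph}, whose minimal vertex covers are classically indexed by the "staircase" of the diagram: a minimal cover of the $U_3$-bipartite graph is of the form $\{a_1,\dots,a_{r}\}\cup\{b_1,\dots,b_{s}\}$ for exactly the pairs $(r,s)$ such that $(r,s)\notin U_3(X)$ but $(r-1,s),(r,s-1)\in U_3(X)$ (with the usual boundary conventions) — and these $(r,s)$ are precisely the degrees of the minimal generators of $I_{X_3}$ recorded in $D_3(X)$; this is exactly the content of the explicit generators for ACM points in $\mathbb P^1\times\mathbb P^1$ cited from \cite{GV-book}, and it is why $D_h(X)$ was defined via those generators. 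A minimal vertex cover of $G_X$ must restrict to a vertex cover of each of the three bipartite subgraphs, and conversely any choice of a cover on each subgraph gives a cover of $G_X$; the three subgraphs share vertices (the $a$'s are common to $G_X|_{ab}$ and $G_X|_{ac}$, etc.), so a minimal cover of $G_X$ is obtained by choosing an "initial segment" $\{a_1,\dots,a_{\alpha}\}$, $\{b_1,\dots,b_{\beta}\}$, $\{c_1,\dots,c_{\gamma}\}$ such that the $(a,b)$-part covers $U_3$, the $(a,c)$-part covers $U_2$, and the $(b,c)$-part covers $U_1$; minimality forces $(\alpha,\beta,\gamma)$ to be coordinatewise-minimal among such triples. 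Translating: $(\alpha,\beta,\gamma)$ must dominate some $(a_{3,i},b_{3,i},0)$, some $(a_{2,k},0,c_{2,k})$, and some $(0,b_{1,j},c_{1,j})$ simultaneously, i.e. it must dominate one of the points $(\max\{a_{3,i},a_{2,k}\},\max\{b_{3,i},b_{1,j}\},\max\{c_{1,j},c_{2,k}\})$ of $D(X)$, and the minimal such triples are exactly the elements of $\widehat D(X)$. Therefore the minimal generators of $J_X$ are $\prod_{i\le a}a_i\prod_{j\le b}b_j\prod_{k\le c}c_k$ for $(a,b,c)\in\widehat D(X)$, and pulling back through the substitution gives the stated generators of $I_X$.

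The main obstacle I expect is the combinatorial bookkeeping in the middle step: justifying rigorously that a minimal vertex cover of $G_X$, restricted to the $a$--$b$ vertices, really is an initial segment $\{a_1,\dots,a_{\alpha}\}\cup\{b_1,\dots,b_{\beta}\}$ of the Ferrers bipartite graph, and that the three restrictions can be chosen independently subject only to the "common initial segment" compatibility — i.e. that replacing an arbitrary minimal cover of a Ferrers bipartite graph by the lexicographically-smallest initial-segment cover of the same size does not destroy the covering property on the other two subgraphs. Here the Ferrers (downward-closed) shape of each $U_h(X)$ is exactly what makes this work: if $\{a_i:i\in I\}\cup\{b_j:j\in J\}$ covers $U_3$, then so does $\{a_1,\dots,a_{|I|}\}\cup\{b_1,\dots,b_{|J|}\}$, and shrinking index sets to initial segments only enlarges (in the $\preceq$ sense of "which variables are used") never shrinks the covered region, so minimality is preserved. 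Once this normal-form statement is in hand, the identification of $\widehat D(X)$ with the minimal feasible triples $(\alpha,\beta,\gamma)$ is a direct unwinding of the definitions of $D_h(X)$ and $D(X)$, and the theorem follows.
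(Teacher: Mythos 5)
Your reduction to the cover ideal $J_X=J(G_X)$ and to minimal vertex covers of $G_X$ is sound, and it is a genuinely different route from the paper, which (after invoking Corollary \ref{c.monomial}) argues directly in $R$: the paper checks that each listed form lies in $I_X$, shows that lowering any exponent produces a form outside $I_X$ by the minimality of the elements of $\hat{D}(X)$, and then reduces an arbitrary element of $I_X$ to a product of linear forms divisible by a listed generator. Your identification of the initial-segment covers of each bipartite piece with $D_h(X)$, and of the minimal feasible triples with $\hat{D}(X)$, is also correct. However, the step you yourself flag as the main obstacle is not closed by your normalization argument, and as written this is a genuine gap. What you prove is: if $W$ is a cover with part sizes $(\alpha,\beta,\gamma)$, then the union of the initial segments of those sizes is again a cover. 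What the theorem needs is: every \emph{minimal} vertex cover of $G_X$ is itself a union of initial segments whose size triple lies in $\hat{D}(X)$ (equivalently, every minimal cover contains one of the listed covers); otherwise $J_X$ has a minimal monomial generator not divisible by any listed monomial, and the listed set fails to generate. The normalized cover is in general not contained in $W$ (normalizing $\{a_2,b_1\}$ gives $\{a_1,b_1\}\not\subseteq\{a_2,b_1\}$), so ``minimality is preserved'' does not deliver the divisibility statement you need.

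The missing argument is short but different from normalization: use minimality of the cover together with downward-closedness. If $W$ is a minimal cover with $a_i\in W$ and $a_{i-1}\notin W$, then $a_i$ covers a private edge, i.e.\ there is $(i,j)\in U_3(X)$ with $b_j\notin W$ or $(i,k)\in U_2(X)$ with $c_k\notin W$; by the Ferrers condition $(i-1,j)$ (resp.\ $(i-1,k)$) again indexes a line of $X$, and that edge is uncovered by $W$, a contradiction. The same works for the $b$'s and $c$'s, so each part of a minimal cover is an initial segment; minimality of the cover then forces its size triple to be a minimal covering triple, hence an element of $\hat{D}(X)$, and conversely the initial-segment covers with sizes in $\hat{D}(X)$ are minimal covers (removing any vertex leaves a set containing an initial-segment union with a strictly smaller size triple, contradicting minimality in $\hat{D}(X)$). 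With that lemma inserted, and a one-line justification that the specialization $a_i\mapsto A_i$, $b_j\mapsto B_j$, $c_k\mapsto C_k$ is modding out a regular sequence (because a Ferrers variety is ACM by Theorem \ref{thm:ch1}, so $J_X$ is CM by Lemma \ref{monomial}), your approach goes through and gives a clean combinatorial alternative to the paper's proof.
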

\begin{proof} First, we prove that if $ (a,b,c)\in \hat{D}(X)$, then $\prod_{i\le a}A_i\prod_{j\le b}B_j\prod_{k\le c}C_k\in I_X.$
Indeed $(a,b,c)\in \hat{D}(X)$ implies $\prod_{i\le a}A_i\prod_{j\le b}B_j \in I_{X_3}$, $\prod_{j\le b}B_j\prod_{k\le c}C_k \in I_{X_1}$, $\prod_{i\le a}A_i\prod_{k\le c}C_k \in I_{X_2}$ and they are not necessarily minimal elements of the respective ideal. Thus $\prod_{i \le a} A_i\prod_{j \le b} B_j\prod_{k \le c} C_k\in I_{X_1}\cap I_{X_2}\cap I_{X_3} = I_X$.
 Now, we show that if $ (a,b,c)\in \hat{D}(X)$ and $a>0$, then $\prod_{i\le a-1}A_i\prod_{j\le b}B_j\prod_{k\le c}C_k\notin I_X.$ This fact follows by contradiction. Indeed if  $\prod_{i\le a-1}A_i\prod_{j\le b}B_j\prod_{k\le c}C_k\in I_X,$ then $(a-1,b,0), (a-1,0,c), (0,b,c)$  are degrees of some (not necessarily minimal) elements in the ideal and therefore there is an element in $D(X)$ less than or equal to  $(a-1,b,c)$, contradicting  the minimality of $(a,b,c)\in \hat{D}(X)$.
Analogously, it can be easily showed that if $ (a,b,c)\in
\hat{D}(X)$ and $b>0$ (or $c>0$), then $\prod_{i\le
a}A_i\prod_{j\le b-1}B_j\prod_{k\le c}C_k\notin I_X$ (or
$\prod_{i\le a}A_i\prod_{j\le b}B_j\prod_{k\le c-1}C_k\notin
I_X$). Finally, we claim that $I_X $ is minimally generated  by
the forms $ \prod_{i\le a}A_i\prod_{j\le b}B_j\prod_{k\le c}C_k$
with $(a,b,c)\in \hat{D}(X)$. Take a form $F\in I_X$, without loss
of generality we can assume that $F:=\prod_{i\in \mathcal
A}A_i\prod_{j\in \mathcal B}B_j\prod_{k\in \mathcal C}C_k$ is
product of linear forms. By contradiction we assume $A_i$ divides $F$ and
$A_{i-1}$ does not divide $F$.     Then $\prod_{i\in \mathcal
A}A_i\prod_{j\in \mathcal B}B_j \in I_{X_3}$. Then $F\in
(\prod_{i\le a'}A_i\prod_{j\le b'}B_j)$ for some $a',b'.$
Repeating the same argument with respect to the other two
directions we get the proof. The minimality come from the
minimality of the degrees in $\hat{D}(X)$.
\end{proof}

The following corollary is an immediate consequence of Theorem \ref{beta_0} and the ACM property.
Set $\langle \hat{D}(X) \rangle := \{(i,j,k)\ |\ (i,j,k) \ge (a,b,c),\ \text{for some}\ (a,b,c) \in \hat{D}(X)  \}$.
\begin{corollary}\label{GORFerrers} Let $X$ be a Ferrers variety of lines. Then
        \[
        \Delta H_X{(i,j,k)}=\begin{cases}
        0 & \text{if}\ (i,j,k) \in \langle \hat{D}(X) \rangle \\
        1 & \text{otherwise}
        \end{cases}.
        \]
\end{corollary}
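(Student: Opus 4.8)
The plan is to deduce the formula for $\Delta H_X$ from the explicit description of the minimal generators of $I_X$ in Theorem \ref{beta_0} together with the fact that $X$, being a Ferrers variety of lines, is ACM (hence $R/I_X$ is Cohen--Macaulay of Krull dimension $4$). First I would observe that, because $(\bar A,\bar B,\bar C)$ is a regular sequence on $R/I_X$ by Lemma \ref{l.reg sequence}, passing to the Artinian reduction $A := R/(I_X + (A,B,C))$ does not change the first difference: $\Delta H_X(i,j,k) = H_{A}(i,j,k)$ for all $(i,j,k)$, where now $A$ is a quotient of the polynomial ring $K[x_1,x_2,x_3]$ in three variables of tridegrees $\mathbf{e}_1,\mathbf{e}_2,\mathbf{e}_3$, so that $\dim_K K[x_1,x_2,x_3]_{i,j,k} = 1$ for every $(i,j,k)\in\mathbb{N}^3$. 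Under this reduction the generators $\prod_{i\le a}A_i\prod_{j\le b}B_j\prod_{k\le c}C_k$ of $I_X$ specialize, up to units, to the monomials $x_1^a x_2^b x_3^c$ with $(a,b,c)\in\hat D(X)$; this is exactly the content of Corollary \ref{c.monomial} and the Hilbert--Burch structure invoked in the proof of Lemma \ref{monomial}, which guarantee that modulo the regular sequence the ideal becomes the monomial ideal $\bigl(x_1^a x_2^b x_3^c \ :\ (a,b,c)\in\hat D(X)\bigr)$.

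Granting that identification, the result is a direct computation with a monomial ideal in three variables: $H_A(i,j,k) = 1$ if the monomial $x_1^i x_2^j x_3^k$ does not lie in $\bigl(x_1^a x_2^b x_3^c : (a,b,c)\in\hat D(X)\bigr)$, and $0$ otherwise. But $x_1^i x_2^j x_3^k$ lies in that monomial ideal precisely when $(i,j,k) \succeq (a,b,c)$ for some $(a,b,c)\in\hat D(X)$, i.e.\ exactly when $(i,j,k)\in\langle\hat D(X)\rangle$. This yields the claimed dichotomy. So the steps, in order, are: (1) use Lemma \ref{l.reg sequence} to reduce to the Artinian quotient and identify $\Delta H_X$ with its Hilbert function; (2) use Corollary \ref{c.monomial} / the Hilbert--Burch argument to identify the Artinian reduction of $I_X$ with the monomial ideal generated by $x_1^a x_2^b x_3^c$, $(a,b,c)\in\hat D(X)$; (3) read off the Hilbert function of a monomial quotient in three variables.

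The main obstacle is step (2): one must be careful that reducing modulo the regular sequence $(\bar A,\bar B,\bar C)$ sends the product generator $\prod_{i\le a}A_i\prod_{j\le b}B_j\prod_{k\le c}C_k$ to a nonzero scalar multiple of $x_1^a x_2^b x_3^c$ and that the reduced ideal is generated exactly by these monomials with no extra relations collapsing the count. This is where the ACM hypothesis is essential: it is precisely the Cohen--Macaulayness that forces the Betti numbers (and hence the minimal generators) to behave well under the general hyperplane sections, so that the minimal generating set of Theorem \ref{beta_0} maps to a minimal generating set of the Artinian reduction. An alternative, perhaps cleaner, route that avoids this subtlety is to argue entirely on the combinatorial side via the isomorphism $R/I_X \cong T/(\overline J_X, L)$ from Lemma \ref{monomial}: compute the multigraded Hilbert series of $S/J_X$ directly from the cover-ideal / Stanley--Reisner description in Remark \ref{r.simpl compl}, using that for a Ferrers variety the complex $\Delta_X$ is (shellable, hence) Cohen--Macaulay, and then extract $\Delta H_X$ from the numerator of the Hilbert series. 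Either way, once the monomial model is in hand the corollary is immediate.
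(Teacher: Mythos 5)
Your argument is correct and is essentially the paper's intended one: the paper states the corollary as an immediate consequence of Theorem \ref{beta_0} and the ACM property, and your route---reduce modulo the regular sequence of linear forms from Lemma \ref{l.reg sequence}, observe that the generators of Theorem \ref{beta_0} become the monomials $x_1^a x_2^b x_3^c$ with $(a,b,c)\in\hat{D}(X)$ in the three-variable quotient, and read off the Hilbert function there---is exactly how that consequence is obtained. Two harmless slips: the reduction is not Artinian (since $R/I_X$ has Krull dimension $4$, the quotient by three linear forms has dimension $1$), but this does not matter because every trigraded piece of $K[x_1,x_2,x_3]$ is $1$-dimensional; and the concern in your step (2) is unnecessary, since the image of $I_X$ in the quotient is automatically generated by the images of any generating set, so only generation (not minimality) is needed for the Hilbert function count.
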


\begin{example}
    Let us consider the following variety of lines
    $$X=\{\mathcal{L} (A_i, B_j) \cup \mathcal{L}(A_i,C_k) \cup \mathcal{L}(B_j,C_k) \ | \  1\le i \le 4, \ 1\le j \le 3, \ 1\le k \le 2 \}.$$
    In this case we have $D_3(X)=\{(4,0,0), (0,3,0) \}$, $D_2(X)=\{(4,0,0), (0,0,2) \}$ and  $D_1(X)=\{(0,3,0), (0,0,2) \}$.
    Then $D(X)=\{(4,3,2), (4,3,0), (4,0,2), (0,3,2) \}$ and  $\hat{D}(X)=\{(4,3,0), (4,0,2), (0,3,2) \}$.
    Therefore, from Theorem \ref{beta_0}, a minimal set of generators of $I_X$ is given by: $$A_1A_2A_3A_4B_1B_2B_3, \ A_1A_2A_3A_4C_1C_2, \  B_1B_2B_3C_1C_2 $$ and
    \[
    \Delta H_X{(i,j,k)}=\begin{cases}
    0 & \text{if}\ (i,j,k) \ge (4,3,0) \ \text{or} \ (4,0,2) \ \text{or} \ (0,3,2) \\
    1 & \text{otherwise}
    \end{cases}.
    \]
\end{example}

\section{Case study: grids of lines and complete intersections of lines}\label{sec:grids}

In the last section we focus on the study of special arrangements of lines  in $\mathbb{P}^1 \times \mathbb{P}^1 \times \mathbb{P}^1$ having the ACM property. Recall that for a point $P\in \mathbb{P}^1 \times \mathbb{P}^1 \times \mathbb{P}^1$ there are exactly three lines passing through $P$, one for each direction. We have the following definition.

\begin{definition}
    Let $\mathcal Y$ be a finite set of points in $\mathbb{P}^1 \times \mathbb{P}^1 \times \mathbb{P}^1$. We call \textit{grid of lines} arising from $\mathcal Y$, and denote it by $X_{\mathcal Y}$, the set containing all the lines of $\mathbb{P}^1 \times \mathbb{P}^1 \times \mathbb{P}^1$ passing through some point of $\mathcal Y$.
\end{definition}

In other words, if $\mathcal Y$ is a finite set of points in $\mathbb{P}^1 \times \mathbb{P}^1 \times \mathbb{P}^1$, 
then $$X_{\mathcal Y}:= \bigcup_{P_{ijk} \in   \mathcal Y }   \mathcal{L} (A_i, B_j) \cup \mathcal{L}(A_i,C_k) \cup  \mathcal{L}(B_j,C_k) $$
where  $P_{ijk}:=\mathcal{L}(A_i)\cap\mathcal{L}(B_j)\cap\mathcal{L}(C_k).$

The next example shows that, even if  $\mathcal Y$ is an ACM set of points,  $X_{\mathcal Y}$ could be not ACM.

\begin{example}
    Suppose $\mathcal Y:=\{P_{112},P_{122}, P_{121}, P_{212}\} \subseteq \mathbb{P}^1 \times \mathbb{P}^1 \times \mathbb{P}^1.$ According to \cite{FGM}, $\mathcal Y$ is an ACM set of points. We have $\mathcal{L} (A_2, B_1),\mathcal{L}(B_2,C_1) \in X_{\mathcal Y}$ and $\mathcal L(A_2, B_2),\mathcal{L}(A_2,C_1)$, $\mathcal{L}(B_1,C_1) \notin X_{\mathcal Y} $, that is, $X_{\mathcal Y} $ has not the $Hyp_4(\star)$-property  and then $X_{\mathcal Y}$ is not ACM.
\end{example}

It is interesting to ask which sets of points $\mathcal Y\subseteq \mathbb{P}^1 \times \mathbb{P}^1 \times \mathbb{P}^1$ lead to an ACM grid of lines $X_{\mathcal Y}.$
A special class of CM rings is  represented by complete
intersections. We recall their definitions and properties.

\begin{definition}\label{CI}
An ideal $I\subset  R$ is a \textit{complete intersection} if it is generated by a
regular sequence.
\end{definition}

As pointed out in  \cite{GV-book}, Lemma 2.25 a complete
intersection is also Cohen-Macaulay.

\begin{definition}\label{d.CI}
     In $ \mathbb{P}^1 \times \mathbb{P}^1 \times \mathbb{P}^1$, we say that a set of points  $\mathcal C$ is a \textit{complete intersection of points of type $(a_1,a_2,a_3)$} if $I_{\mathcal C} = (F_1, F_2, F_3)$ is a complete intersection and $\deg F_i = a_i \bf e_i$  for $i=1,2,3$.

\end{definition}
Note that each $F_i$ in Definition \ref{d.CI} is product of linear forms.

\begin{definition}
    We say that a variety of lines $X$ is a \textit{ complete intersection of lines} in $\mathbb{P}^1 \times \mathbb{P}^1 \times \mathbb{P}^1$ if $I_X$ is a complete intersection.
\end{definition}

\begin{theorem} \label {thmACM} Let $\mathcal C\subseteq \mathbb{P}^1 \times \mathbb{P}^1 \times \mathbb{P}^1$ be a complete intersection of points of type $(a,b,c).$ Then $X_{\mathcal C} $ is ACM and a trigraded minimal free resolution of $I_{X_{\mathcal C}}$ is
    $$0 \rightarrow  R^2(-a,-b,-c) \rightarrow R(-a,-b,0) \oplus R(-a,0,-c) \oplus R(0,-b,-c) \rightarrow I_{X_{\mathcal C}} \rightarrow 0.$$
\end{theorem}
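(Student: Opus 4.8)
The plan is to make everything explicit, since a complete intersection of points of type $(a,b,c)$ forces a very rigid combinatorial structure on the grid $X_{\mathcal C}$. First I would write $I_{\mathcal C}=(F_1,F_2,F_3)$ with $F_i$ of degree $a_i\mathbf{e}_i$, and use the remark after Definition \ref{d.CI} that each $F_i$ is a product of linear forms: say $F_1=A_1\cdots A_a$, $F_2=B_1\cdots B_b$, $F_3=C_1\cdots C_c$, where the $A_i$ (resp. $B_j$, $C_k$) are pairwise non-proportional because $(F_1,F_2,F_3)$ is a regular sequence, equivalently the $a$ points of $\mathcal{L}(F_1)$ on the first $\mathbb{P}^1$, the $b$ points on the second, and the $c$ points on the third are all distinct. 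Hence $\mathcal C=\{P_{ijk}\mid 1\le i\le a,\ 1\le j\le b,\ 1\le k\le c\}$ is a full $a\times b\times c$ box of points, and by the definition of the grid of lines $X_{\mathcal C}=\bigcup \mathcal{L}(A_i,B_j)\cup\mathcal{L}(A_i,C_k)\cup\mathcal{L}(B_j,C_k)$ over all such $(i,j,k)$. In particular the index sets $U_3(X_{\mathcal C})=[a]\times[b]$, $U_2(X_{\mathcal C})=[a]\times[c]$, $U_1(X_{\mathcal C})=[b]\times[c]$ are full rectangles, so $X_{\mathcal C}$ is a Ferrers variety of lines, and by the Remark following the definition of Ferrers varieties it is ACM. (Alternatively one checks the $Hyp_n(\star)$-property directly from Theorem \ref{thm:ch1}: with full rectangular index sets the hypotheses of the property are trivially met.)

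\textbf{The resolution.} For the explicit resolution I would proceed in two steps. First, by Theorem \ref{beta_0} applied to this Ferrers variety, compute $\hat{D}(X_{\mathcal C})$: here $D_3(X_{\mathcal C})=\{(a,0,0),(0,b,0)\}$, $D_2(X_{\mathcal C})=\{(a,0,0),(0,0,c)\}$, $D_1(X_{\mathcal C})=\{(0,b,0),(0,0,c)\}$, so $D(X_{\mathcal C})=\{(a,b,c),(a,b,0),(a,0,c),(0,b,c)\}$ and the minimal elements are $\hat{D}(X_{\mathcal C})=\{(a,b,0),(a,0,c),(0,b,c)\}$. Thus $I_{X_{\mathcal C}}$ is minimally generated by the three forms $G_3:=F_1F_2$, $G_2:=F_1F_3$, $G_1:=F_2F_3$, of degrees $(a,b,0)$, $(a,0,c)$, $(0,b,c)$ respectively, which gives the middle term $R(-a,-b,0)\oplus R(-a,0,-c)\oplus R(0,-b,-c)$ of the claimed resolution and hence the surjection onto $I_{X_{\mathcal C}}$. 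Second, identify the syzygies: the obvious Koszul-type relations among $G_1,G_2,G_3$ are $F_3\cdot G_3-F_2\cdot G_2=0$ and $F_3\cdot G_3-F_1\cdot G_1=0$ (equivalently $F_1G_1=F_2G_2=F_3G_3=F_1F_2F_3$), giving two independent syzygies each of tridegree $(a,b,c)$, i.e. a map $R^2(-a,-b,-c)\to R(-a,-b,0)\oplus R(-a,0,-c)\oplus R(0,-b,-c)$. One then checks this presentation is exact and that the first map is injective, so the displayed complex is a trigraded free resolution, and minimality is clear since all matrix entries lie in the irrelevant ideal (every entry is a nonconstant product of linear forms).

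\textbf{Why exactness holds.} The cleanest way to see exactness is to recognize the whole complex as coming from the Hilbert--Burch / Eagon--Northcott mechanism for the height-two ideal $I_{X_{\mathcal C}}$: the second syzygy matrix is $2\times 3$ with columns $(F_3,-F_2,0)^{t}$ and $(F_3,0,-F_1)^{t}$ (up to sign conventions), and its maximal minors are, up to sign, $F_2F_3, F_1F_3, F_1F_2$, which are exactly $G_1,G_2,G_3$; since $\mathrm{grade}\,I_{X_{\mathcal C}}=2=3-2$ (the codimension is two, as $X_{\mathcal C}$ is a variety of lines, and ACM-ness gives Cohen--Macaulayness of $R/I_{X_{\mathcal C}}$), the Hilbert--Burch theorem yields that $R/I_{X_{\mathcal C}}$ has the above as its minimal free resolution. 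I expect the main obstacle to be purely bookkeeping: pinning down the exact signs and the precise form of the $2\times 3$ matrix so that its minors reproduce the generators from Theorem \ref{beta_0}, and confirming that the two tridegree-$(a,b,c)$ syzygies are independent and generate the full syzygy module (rather than invoking a black-box theorem, one may simply verify the Euler characteristic / Hilbert series of the complex equals that of $R/I_{X_{\mathcal C}}$, which follows from Corollary \ref{GORFerrers}). Everything else reduces to the structural observation that a complete intersection of points of type $(a,b,c)$ is exactly a full $a\times b\times c$ grid.
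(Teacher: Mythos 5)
Your proposal is correct and follows essentially the same route as the paper: establish that $X_{\mathcal C}$ is ACM via the $Hyp_n(\star)$-characterization (Theorem \ref{thm:ch1}), identify the three generators $F_1F_2$, $F_1F_3$, $F_2F_3$ (the paper gets them from Corollary \ref{c.monomial}, you via Theorem \ref{beta_0}, which amounts to a more detailed justification of the same step), and conclude with the Hilbert--Burch matrix, which in the paper is exactly your $2\times 3$ syzygy matrix up to row/column rearrangement and signs. The only small slip is your parenthetical claim that distinctness of the linear factors of the $F_i$ follows from the regular-sequence condition; it actually follows from $I_{\mathcal C}$ being the (radical) ideal of a reduced set of points, but this does not affect the argument.
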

\begin{proof}
    The grid of lines $X:=X_{\mathcal C}$ has the $Hyp_n(\star)$-property for $n=4, 5, 6$ and then, by Theorem \ref{thm:ch1}, $X$ is ACM. Moreover, by Corollary \ref{c.monomial} the generators of $I_X$ are product of linear forms, so
    $$I_X= \left(\prod \limits _{\substack{ i \in [a] }} A_i \prod \limits_{\substack{j \in [b]}} B_j \ , \ \prod \limits _{\substack{i \in [a]}} A_i \prod \limits_{\substack{k \in [c]}} C_k \ , \ \prod \limits _{\substack{j \in [b]}} B_j \prod \limits_{\substack{k \in [c]}} C_k \right).$$
    Then a Hilbert-Burch matrix of $I_X$ is
    \begin{displaymath} \left( \begin{matrix} \prod \limits _{\substack{ i \in [a]}} A_i & \prod \limits _{\substack{ i \in [a]}} A_i \\ \prod \limits_{\substack{j \in [b]}} B_j & 0 \\ 0 & \prod \limits_{\substack{k \in [c]}} C_k
    \end{matrix} \right).
    \end{displaymath}
\end{proof}

\begin{example}
If $\mathcal C \subseteq \mathbb{P}^1 \times \mathbb{P}^1 \times
\mathbb{P}^1$ is a complete intersection of points of type
$(2,3,2)$, then the grid $X_{\mathcal C}$ is formed by 6 lines of
type $(1,1,0)$, 4 lines of type $(1,0,1)$ and 6 lines of type
$(0,1,1)$:
    \begin{center}
        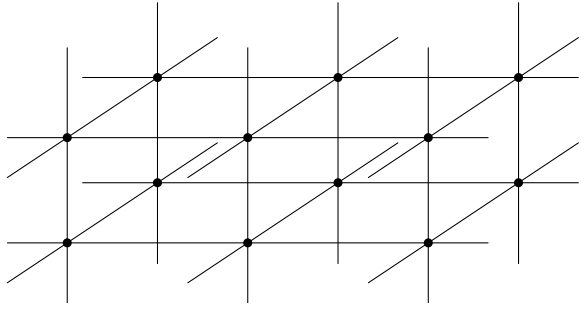
\begin{figure}[H]
            \centering
            \begin{tikzpicture}[scale=0.4]





            \draw (2,0) - - (2,8.5);
            \draw (5,1.3) - - (5,10);
            \draw (8,0) - - (8,8.5);
            \draw (11,1.3) - - (11,10);

            \draw (14,0) - - (14,8.5);
            \draw (17,1.3) - - (17,10);

            \draw (0,2) - - (16,2);
            \draw (2.5,4) - - (19,4);
            \draw(0,55/10) - - (16,55/10);
            \draw(2.5,75/10) - - (19,75/10);

            \draw (0,2/3) - - (7,16/3);
            \draw (0,25/6) - - (7,53/6);
            \draw (6,2/3) - - (13,16/3);
            \draw (6,25/6) - - (13,53/6);

            \draw (12,2/3) - - (19, 16/3);
            \draw (12,25/6) - - (19, 53/6);

            \fill (2,2) circle (1.5mm);
            \fill (8,2) circle (1.5mm);
            \fill (14,2) circle (1.5mm);
            \fill (8,5.5) circle (1.5mm);
            \fill (2,5.5) circle (1.5mm);
            \fill (14, 5.5) circle (1.5mm);
            \fill (5,4) circle (1.5mm);
            \fill (11,4) circle (1.5mm);
            \fill (17,4) circle (1.5mm);
            \fill (11,7.5) circle (1.5mm);
            \fill (5,7.5) circle (1.5mm);
            \fill (17, 7.5) circle (1.5mm);



            \end{tikzpicture}
            \caption{The grid of lines $X_{\mathcal C}$ arising from  a CI of type $(2,3,2)$. }
        \end{figure}
    \end{center}
In particular, $ I_{X_{\mathcal C}}$ has a trigraded minimal free  resolution of the following type
$$0 \rightarrow  R^2(-2,-3,-2) \rightarrow R(-2,-3,0) \oplus R(-2,0,-2) \oplus R(0,-3,-2) \rightarrow I_{X_{\mathcal C}} \rightarrow 0.$$
\end{example}

The following example shows that there exists an ACM grid of lines $X_{\mathcal Y}$ arising from a not ACM set of points $\mathcal Y$.
\begin{example}
    The following set of points $\mathcal Y:=\{ P_{111}, P_{121},P_{211}, P_{122},P_{212}, P_{222}\}$ is not an ACM set of points in  $\mathbb{P}^1 \times \mathbb{P}^1 \times \mathbb{P}^1$ (see \cite{FGM}). However, $X_{\mathcal Y}=X_{\mathcal C}$ where $\mathcal C:=\{ P_{ijk}\ |\  1\le i,j,k \le 2\}$, and then $X_{\mathcal Y}$ is an ACM grid of lines.
\end{example}

From Theorem \ref{thmACM}, we note that the ideal $I_{X_{\mathcal
C}}$  is generated by three forms that do not form a regular
sequence. That is, even if $\mathcal C$ is a complete intersection
of points, then its associated $X_{\mathcal C}$ variety of lines
is not a complete intersection of lines. Thus, it is natural to
study which varieties of lines are defined by a complete
intersection, i.e., their defining ideal has only two generators.
Theorem \ref{thmcompleteintersection} and Remark \ref{CIlines}
will describe complete intersections  of lines in $\mathbb{P}^1
\times \mathbb{P}^1 \times \mathbb{P}^1$.

\begin{remark}\label{rem:dgens}
    If $X$ is an ACM variety of lines, from Corollary  \ref{c.monomial}, $I_X$ is generated by products of linear forms. Then
    $$I_X\supseteq \left(\prod \limits _{\substack{ i \in [a]}} A_i \prod \limits_{\substack{j \in [b]}} B_j \ , \ \prod \limits _{\substack{i \in [a]}} A_i \prod \limits_{\substack{k \in [c]}} C_k \ , \ \prod \limits _{\substack{j \in [b]}} B_j \prod \limits_{\substack{k \in [c]}} C_k \right).$$
    So any set of minimal generators of $I_X$ contains one element of degree $(a_3, b_3, 0)$, one element of degree $(a_2,0,c_2)$ and one element of degree $(0,b_1,c_1)$.
\end{remark}

\begin{theorem} \label{thmcompleteintersection}
    Let $X$ be a variety of lines of $\mathbb{P}^1 \times \mathbb{P}^1 \times \mathbb{P}^1$. Then the ideal $I_X$ is a complete intersection if and only if  $I_X= (F_1,F_2)$, with $\deg F_1=a \textbf{e}_i$
    and $\deg F_2=b\textbf{e}_{j}+c\textbf{e}_{k}$ with $j,k \neq i$, for some  $a,b,c \in \mathbb{N}.$
\end{theorem}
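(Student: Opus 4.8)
The plan is to prove the two implications separately; the reverse one is soft, while the forward one rests on the structural description of a minimal generating set of $I_X$ recorded in Remark \ref{rem:dgens}.

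\emph{($\Leftarrow$)} Suppose $I_X=(F_1,F_2)$ with $\deg F_1=a\textbf{e}_i$ and $\deg F_2=b\textbf{e}_j+c\textbf{e}_k$, $j,k\neq i$. Since $X$ is a variety of lines, $I_X$ is unmixed of height $2$; in particular it is not principal, so $F_1\neq 0$ and $(b,c)\neq(0,0)$. As $R$ is a polynomial ring (hence Cohen--Macaulay) and $I_X$ is generated by two elements and has height $2$, those two generators automatically form a regular sequence, so $I_X$ is a complete intersection. One can also see this directly: $F_1$ involves only $x_{i,0},x_{i,1}$ while $F_2$ involves only the four remaining variables, disjoint from the former; hence $F_1$ is a nonzerodivisor on $R$, and $F_2$, a polynomial in those four variables with all coefficients in the field $K$, is a nonzerodivisor on $R/(F_1)\cong\big(K[x_{i,0},x_{i,1}]/(F_1)\big)[x_{j,0},x_{j,1},x_{k,0},x_{k,1}]$.

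\emph{($\Rightarrow$)} Suppose $I_X$ is a complete intersection. Being a height-two complete intersection in a Cohen--Macaulay ring, $I_X$ is minimally generated by exactly two elements $F_1,F_2$, and $R/I_X$ is Cohen--Macaulay, so $X$ is ACM. Then Remark \ref{rem:dgens} applies: any minimal generating set of $I_X$, in particular $\{F_1,F_2\}$, must contain a generator whose degree has third coordinate $0$, one whose degree has second coordinate $0$, and one whose degree has first coordinate $0$. Since there are only two generators, two of these three prescribed generators coincide (they cannot all three coincide, else $I_X$ would be principal, contradicting $\mathrm{ht}\,I_X=2$). The generator playing a double role then has two vanishing degree coordinates, so its degree is $a\textbf{e}_i$ for the unique index $i$ not among the two prescribed zero-positions; call it $F_1$. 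The remaining prescribed generator must be $F_2$, and its degree is zero in the $i$-th coordinate, that is $\deg F_2=b\textbf{e}_j+c\textbf{e}_k$ with $\{j,k\}=\{1,2,3\}\setminus\{i\}$. This is exactly the claimed form.

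The only delicate point is the forward direction: one must correctly combine the fact that a height-two complete intersection has exactly two minimal generators with the structural Remark \ref{rem:dgens} (which itself rests on Corollary \ref{c.monomial}, hence on Lemma \ref{monomial}), and then carry out the short pigeonhole bookkeeping that forces one generator to be of ``pure'' type $a\textbf{e}_i$ and the other to vanish in the $i$-th degree coordinate. Degenerate configurations where some $X_h$ is empty, so that the corresponding prescribed generator collapses, should be checked separately; these are immediate and again fall under the stated form, with one of $a,b,c$ equal to $0$.
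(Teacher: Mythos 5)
Your proof is correct and follows essentially the same route as the paper: the reverse implication is the standard height-two, two-generator fact, and the forward implication combines ACM-ness with Remark \ref{rem:dgens} and a short degree bookkeeping, which the paper phrases as one of the three prescribed generators being non-minimal (hence lying in the ideal of the other two, forcing $a_2a_3=0$) and you phrase as a pigeonhole forcing one minimal generator to play two roles — the same idea, carried out slightly more explicitly by you. One tiny repair: if all three prescribed roles fell on a single generator, its degree would be $(0,0,0)$ and so $I_X=R$, which is the real contradiction; it would not make $I_X$ principal as your parenthetical claims.
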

\begin{proof}
    One implication is trivial. Let  $I_X$ be a complete intersection, i.e. $I_X$ is generated by a regular sequence of length 2, then $X$ is ACM. So, from Remark  \ref{rem:dgens}, any set of minimal generators of $I_X$ contains one element $G_1$ of degree $(0, b_1, c_1)$, one element $G_2$ of degree $(a_2, 0, c_2)$ and one element $G_3$ of degree $(a_3, b_3, 0)$ for some integers $a_i,b_j,c_k$. Since $I_X$ is a complete intersection, one of these three generators say, without loss of generality, the one of degree $(0, b_1, c_1)$, is not minimal, i.e. $G_1\in (G_2, G_3)$. This easily implies $a_2a_3=0.$
\end{proof}

\begin{remark}\label{CIlines}
From Theorem \ref{thmcompleteintersection}, a complete intersection  of lines $X$ is then obtained from a grid arising from a complete intersection of points by removing either all the lines having direction $\textbf{e}_i$ for some $i$,  or all the lines having direction $\textbf{e}_i$ and $\textbf{e}_j$ with $i\neq j$.
    Indeed, from Remark \ref{rem:dgens}, we have for instance $$I_X=\left(\prod \limits _{\substack{ i \in [a]}} A_i , \ \prod \limits _{\substack{j \in [b]}} B_j \prod \limits_{\substack{k \in [c]}} C_k \right)= \bigcap_{i \in [a] \atop  j \in [b]}(A_i,B_j) \cap \bigcap_{i \in [a] \atop  k \in [c]}(A_i,C_k).$$
\end{remark}

\begin{example}
    Let $X$ be the set of lines of $\mathbb{P}^1 \times \mathbb{P}^1 \times \mathbb{P}^1$ obtained by a grid of lines $X_{\mathcal C}$ arising from a complete intersection $\mathcal C$ of type $(4,3,2)$ removing all the lines having direction $\bf{e_2}$: $$X= \bigcup_{i \in [4] \atop j \in [3]} \mathcal{L} (A_i, B_j) \bigcup_{j \in [3] \atop k \in [2]} \mathcal{L}(B_j,C_k).$$
    Then the ideal $I_X$ is a complete intersection and it is generated by the regular sequence $F_1=B_1B_2B_3$ and $F_2=A_1A_2A_3A_4C_1C_2$ of degree $(0,3,0)$ and $(4,0,2)$, respectively.

\end{example}

We end the paper with two research topics that are still under our investigation.
\begin{enumerate}\label{GOR}

\item Guida, Orecchia and Ramella, in \cite{GOR}, studied the \textit{complete grids}
of lines in $\mathbb{P}^3$, whose defining ideal is the 1-lifting
ideal of a specific monomial ideal $J$ in a polynomial ring $S$ in
three variables. In particular, from Example 4.9 in \cite{GOR} and
Corollary \ref{GORFerrers}, we noted that the first difference of
the Hilbert function of the ideal $I_{X_{\mathcal C}}$ of a grid
of lines arising from a complete intersection of points of type
$(2,2,2)$ in $\mathbb{P}^1 \times \mathbb{P}^1 \times
\mathbb{P}^1$ in degree $(i,j,k)$ is equal to $1$ if and only if
$(i,j,k)$ belongs to  the order ideal $N(J) \subseteq
\mathbb{N}^3$ of the specific monomial ideal $J= (x_{1}^2x_2^2,
x_1^2x_3^2,x_2^2x_3^2)$ in $S$.

\item
Let us consider the ACM varieties of lines $X$ and the Ferrers variety of lines $X'$ as in Figure \ref{fig7} and Figure \ref{fig8}, respectively. We have that, for each $h=1,2,3$, $X_h$ and $X'_h$ have the same Hilbert functions. We also get  $H_X=H_{X'}$.

   \begin{figure}[H]
   \begin{minipage}{.5 \textwidth}
   \begin{tikzpicture}[scale=0.44]

   \draw [ultra thick] (5,1.3) - - (5,10);

   \draw [ultra thick] (2.5,4) - - (13,4);

   \draw[ultra thick] (6,2/3) - - (13,16/3);

   \fill (5,4) circle (2mm);
   \fill (11,4) circle (2mm);

\node [font=\footnotesize, text width=10mm] at (2.8,10) { $\bf{\mathcal{L}(A_1,B_1)}$};
\node [font=\footnotesize, text width=10mm] at (0.4,4) { $\bf{\mathcal{L}(A_1,C_1)}$};
\node [font=\footnotesize, text width=10mm] at (7.9,2/3) { $\bf{\mathcal{L}(B_2, C_1)}$};

   \end{tikzpicture} \caption{} \label{fig7}
   \end{minipage}%
   \begin{minipage}{.5 \textwidth}
   \begin{tikzpicture}[scale=0.44]

   \draw [ultra thick] (5,1.3) - - (5,10);

   \draw [ultra thick] (2.5,4) - - (13,4);

   \draw [ultra thick] (0,2/3) - - (7,16/3);

   \fill (5,4) circle (2mm);

\node [font=\footnotesize, text width=10mm] at (2.8,10) { $\bf{\mathcal{L}(A_1,B_1)}$};
\node [font=\footnotesize, text width=10mm] at (0.4,4) { $\bf{\mathcal{L}(A_1,C_1)}$};
\node [font=\footnotesize, text width=10mm] at (2,2/3) { $\bf{\mathcal{L}(B_1, C_1)}$};

   \end{tikzpicture} \caption{}  \label{fig8}
   \end{minipage}
   \end{figure}

According to many experimental computations using CoCoA,
\cite{cocoa}, we ask the following question:
\begin{question}\label{HF}
    Let $X$ be an ACM variety of lines and $X'$ be a Ferrers variety of lines such that,  for $h=1,2,3,$ $X_h$ and $X'_h$ have the same Hilbert functions. Is it true that $H_X=H_{X'}?$
\end{question}
\end{enumerate}

\end{document}